      \definecolor{bleu_sombre}{rgb}{0,0,0.6}  
      \definecolor{rouge_sombre}{rgb}{0.8,0,0}
      \definecolor{vert_sombre}{rgb}{0,0.6,0}
\DeclareMathOperator{\Ima}{Im}
\DeclareMathOperator{\Ker}{Ker}
\DeclareMathOperator{\Hom}{Hom}
\DeclareMathOperator{\ds}{s^{-1}}
\DeclareMathOperator{\s}{s}
\DeclareMathOperator{\Tw}{Tw}
\DeclareMathOperator{\Ba}{\mathcal{B}}
\newtheorem*{rep@theorem}{\rep@title}
\newcommand{\newreptheorem}[2]{%
\newenvironment{rep#1}[1]{%
 \def\rep@title{#2 \ref{##1}}%
 \begin{rep@theorem}}%
 {\end{rep@theorem}}}
\newtheorem*{rep@prop}{\rep@title}
\newcommand{\newrepprop}[2]{%
\newenvironment{rep#1}[1]{%
 \def\rep@title{#2 \ref{##1}}%
 \begin{rep@prop}}%
 {\end{rep@prop}}}
\begin{document}


\theoremstyle{definition}
\newtheorem{de}{Definition}[section]
\theoremstyle{remark}
\newtheorem{rmq}[de]{Remark}
\newtheorem{exple}{Example}
\theoremstyle{plain}
\newtheorem{theo}[de]{Theorem}
\newtheorem{prop}[de]{Proposition}
\newtheorem{lem}[de]{Lemma}
\newtheorem{cor}[de]{Corollary}
\newrepprop{prop}{Proposition}
\newtheorem{properties}[de]{Properties}
\newreptheorem{theo}{Theorem}
\numberwithin{equation}{section}

\newtheorem*{cor*}{Corollary}

\newcommand{\cob}{\Omega C_* (X)}
\newcommand{\ccob}{\Omega^2 C_* (X)}
\newcommand{\Hopf}{\mathcal{H}}
\newcommand{\ot}{\otimes} 
\newcommand{\diskf}{\mathcal{C}^f_2} 
\newcommand{\disk}{\mathcal{C}_2}
\newcommand{\cobH}{\Omega \mathcal{H}}
\newcommand{\exd}{\widetilde{\sigma}} 
\newcommand{\tra}{\mathcal{T}}
\newcommand{\Zdeux}{\mathbb{Z}/2\mathbb{Z}}
\newcommand{\os}{\Omega \Sigma}
\newcommand{\La}{\Lambda}

\newcommand{\ov}[1]{\overline{#1}}
\newcommand{\OS}[1]{\Omega^{#1} \Sigma^{#1}}
\newcommand{\uv}[1]{\underline{#1}}
\newcommand{\wid}[1]{\widetilde{#1}}

\newcommand{\att}[1]{\textcolor{red}{#1}}
\newcommand{\note}[1]{\marginpar{\begin{footnotesize}\att{#1}\end{footnotesize}}}

\begin{frontmatter}

\title{{Homotopy BV-algebra structure on the double cobar construction}}

\author[rvt]{Alexandre Quesney}
\ead[rvt]{alexandre.quesney@univ-nantes.fr}
\address[rvt]{Laboratoire de Math\'ematiques Jean Leray - Université de Nantes, 2 rue de la Houssini\`ere - BP 92208 F-44322 Nantes Cedex 3, France.}

\begin{abstract}
We show that the double cobar construction, $\ccob$, of a simplicial set $X$ is a homotopy BV-algebra if $X$ is a double suspension, or if $X$ is 2-reduced and the coefficient ring contains the field of rational numbers $\mathbb{Q}$. Indeed, the Connes-Moscovici operator defines the desired homotopy BV-algebra structure on $\ccob$ when the antipode $S:\cob\to\cob$ is involutive. We proceed by defining a family of obstructions $O_n:\wid{C}_*(X)\to \wid{C}_*(X)^{\ot n}$, $n\geq 2$ by computing $S^2-Id$. When $X$ is a suspension, the only obstruction remaining is  $O_2:=E^{1,1}-\tau E^{1,1}$ where $E^{1,1}$ is the dual of the $\smile_1$-product.  When $X$ is a double suspension the obstructions vanish.
\end{abstract}

\begin{keyword}
 Cobar construction,\ Homotopy G-algebra,\ BV-algebra,\ Hopf algebra.
\MSC 55P48,\ 55U10,\ 16T05.
\end{keyword}
\end{frontmatter}

\tableofcontents

\section*{Introduction}
\addcontentsline{toc}{section}{Introduction}

Adams' cobar construction provides a model of the loop space of a $1$-connected topological space \cite{Adams}. 
The cobar construction is a functor from differential graded coalgebras to differential graded algebras;  
for iteration, a coproduct is thus needed.
For a $1$-reduced simplicial set $X$, 
Baues defined \cite{Bauesgeom} a DG-bialgebra structure on its first cobar construction $\cob$.
The resulting double cobar construction is an algebraic model for the double loop space. \\
We show that the double cobar construction, $\ccob$, of a simplicial set $X$ is a homotopy BV-algebra (a homotopy G-algebra in the sense of Gerstenhaber-Voronov \cite{GerstVoronov} together with a degree one operator) if $X$ is a double suspension, or if $X$ is 2-reduced and the coefficient ring contains the field of rational numbers $\mathbb{Q}$.

Baues' coproduct on $\cob$ is equivalent to a  homotopy G-coalgebra structure $\{E^{k,1}\}_{k\geq 1}$ on the DG-coalgebra $C_*(X)$, that is a family of operations 
\[
 E^{k,1}: \wid{C}_*(X)\to  \wid{C}_*(X)^{\ot k}\ot \wid{C}_*(X), ~~{k\geq 1},
\]
satisfying some relations.  This corresponds also to the coalgebra structure of $C_*(X)$ over the second stage filtration operad $F_2\chi$ of the surjection operad $\chi$ given in \cite{McClureSmithmulti,BergerFresse}, see Section \ref{section hGc}.
Since a bialgebra structure determines the antipode (whenever it exists), the antipode 
\[
 S:\cob\to \cob
\]
 on the cobar construction is then determined by the homotopy G-coalgebra structure on $C_*(X)$.

As a model for the chain complex of  double loop spaces, the double cobar construction is expected to be a Batalin-Vilkovisky algebra up to homotopy. 
Indeed, it is well-known that the circle action on the double loop space  $\Omega^2X$ by rotating the equator defines a BV operator; thereby the homology $H_*(\Omega^2 X)$ is a Batalin-Vilkovisky algebra \cite{Getzler}.

The cobar construction $\Omega \Hopf$ of an involutive Hopf algebra turns out to be the underlying complex in the Hopf-cyclic Hochschild cohomology of $\Hopf$, \cite{ConnesMosc}. 
The cyclic operator requires an involutive antipode on the underlying bialgebra $\Hopf$.
Assuming that the antipode is involutive, Menichi proved \cite{MenichiBV}, that for a unital (ungraded) Hopf algebra $\Hopf$, the Connes-Moscovici  operator induces a Batalin-Vilkovisky algebra on the homology of the cobar construction $H_*(\cobH)$.  

By defining a family of operations 
\[
 O_n:\wid{C}_*(X)\to \wid{C}_*(X)^{\ot n}, ~~n\geq 2,
\]
we provide a criterion for the involutivity of $S$ in terms of the operations 
\[
 E^{k,1}: \wid{C}_*(X)\to  \wid{C}_*(X)^{\ot k}\ot \wid{C}_*(X). 
\]

By Kadeishvili's work \cite{Kadeishvili} the double cobar construction is a homotopy G-algebra, it is endowed with a family of operations 
\[
 E_{1,k}: \ccob \ot \big(\ccob\big)^{\ot k} \to\ccob,~~k\geq 1,
\]
satisfying some relations. 
In particular, the following bracket 
\[
 \{a;b\}=E_{1,1}(a\ot b)- (-1)^{(|a|+1)(|b|+1)}E_{1,1}(b\ot a), ~~a,b\in\ccob,
\]
together with the DG-product of $\ccob$, induces a Gerstenhaber algebra structure on the homology $H_*(\ccob)$.
The vanishing of the operations 
\[
O_n:\wid{C}_*(X)\to \wid{C}_*(X)^{\ot n}, ~~n\geq 2,
\]
 gives a sufficient condition for the extension of the homotopy G-algebra structure on $\ccob$ given by Kadeishvili to the homotopy BV-algebra structure whose BV-operator is the  Connes-Moscovici operator.
We establish the following for a general homotopy G-coalgebra,
\begin{repprop}{propobstruction}
Let $(C,d,\nabla_C,E^{k,1})$ be a homotopy G-coalgebra. 
\begin{enumerate}
 \item The cobar construction $\Omega C$ is an involutive DG-Hopf algebra if and only if  all the obstructions $O_n:\wid{C}\to \wid{C}^{\ot n}$ defined in \eqref{invHcoal2} 
for $n\geq 2$ are zero.
\item Let $C$  be $2$-reduced i.e. $C_0=R$ and $C_1=C_2=0$.
If all the obstructions $O_n:\wid{C}\to \wid{C}^{\ot n}$  are zero, then the double cobar construction  $\Omega^2C$ is a homotopy BV-algebra given by the Connes-Moscovici operator.
\end{enumerate}
\end{repprop}
We apply this criterion to the homotopy G-coalgebra $C_*(X)$ of a $1$-reduced simplicial set $X$. We show that, when $\Sigma X$ is a simplicial suspension, the family of operations
\[
O_n:\wid{C}_*(\Sigma X)\to \wid{C}_*(\Sigma X)^{\ot n} 
\]
 reduces to 
\[
  O_2:\wid{C}_*(\Sigma X)\to \wid{C}_*(\Sigma X)\ot\wid{C}_*(\Sigma X).
 \]
 This operation $O_2$ is the deviation from the cocommutativity of the operation 
\[
E^{1,1}:\wid{C}_*(\Sigma X)\to \wid{C}_*(\Sigma X)\ot\wid{C}_*(\Sigma X). 
\]
In fact, $E^{1,1}$ is the only non-trivial operation of the family $\{E^{k,1}\}_{k\geq 1}$ defining the homotopy G-algebra structure on $C_*(\Sigma X)$, see Propositions \ref{susp1} and \ref{obstructionSuspension1}.
However, in the case of a double simplicial suspension $\Sigma^2 X$, the operation $E^{1,1}$ is also trivial, and all the obstructions 
\[
 O_n:\wid{C}_*(\Sigma^2 X)\to \wid{C}_*(\Sigma^2 X)^{\ot n}, ~~n\geq 2,
\]
are zero. 
As a consequence, the cobar construction $\Omega C_*(\Sigma^2 X)$ is the free tensor DG-algebra with the shuffle coproduct. Thus we have,
\begin{reptheo}{DoubleSuspension}
 Let $\Sigma^2 X$ be a  double suspension. Then:
\begin{itemize}
 \item the homotopy G-coalgebra structure on $C_*(\Sigma^2 X)$ corresponding to Baues' coproduct 
\[
\nabla_0:\Omega C_*(\Sigma^2 X) \to \Omega C_*(\Sigma^2 X)\ot  \Omega C_*(\Sigma^2 X), 
\]
 has trivial higher operations i.e. $E^{1,k}=0$ for $k\geq 1$;
 \item the double cobar construction $\Omega^2 C_*(\Sigma^2 X)$ is a homotopy BV-algebra with the Connes-Moscovici operator as BV-operator.
\end{itemize}
\end{reptheo}

On the other hand, if the ground ring contains the field of rational numbers $\mathbb{Q}$, we deform (see Section \ref{sectionQ} for the precise statement) $(\cob,\nabla_0,S)$ into a cocommutative DG-Hopf algebra $(\cob,\nabla'_0,S')$. 
Therefore, $(\cob,\nabla'_0,S')$ has an involutive antipode and the homotopy BV-algebra structure we consider on the deformed double cobar construction  $\Omega (\cob,\nabla'_0,S')$ follows. Thus, we have
\begin{reptheo}{CorBV}
 Let $X$ be a $2$-reduced simplicial set. Then the double cobar construction $\Omega (\cob,\nabla'_0,S')$ over $R\supset\mathbb{Q}$ coefficients is a homotopy BV-algebra  with BV-operator the Connes-Moscovici operator.
\end{reptheo}

The paper is organized as follows.

In the first section we review background materials on the bar-cobar constructions and Hopf algebras.\\
The second section is devoted to the structures of homotopy G-algebras \cite{Kadeishvili} and homotopy BV-algebras \cite{MenichiBV} on the cobar construction. These first two sections fix notations and sign conventions.\\
In the third section we define the family of operations $O_n:\wid{C}\to \wid{C}^{\ot n}$, $n\geq 2$,
on a homotopy G-coalgebra $(C,E^{k,1})$.\\
In the section 4 we give applications :\\
In the subsection 4.1 we set the convention for the homotopy G-coalgebra structure on $C_*(X)$. We compare the homotopy G-coalgebra structure on $C_*(X)$ coming from Baues' coproduct with the action of the surjection operad given in \cite{McClureSmithmulti,BergerFresse}.\\
Subsections 4.2 and 4.3 give applications to simplicial suspensions. 
In the case of single suspension $\Sigma X$ the family of operations $O_n:\wid{C}_*(\Sigma X)\to \wid{C}_*(\Sigma X)^{\ot n}$, $n\geq 2$ reduces to $O_2$.
 We show that, for a double suspension, this last obstruction $O_2$ vanishes. \\
The last subsection 4.4 is devoted to the rational case. We prove that the double cobar construction of a $2$-reduced simplicial set is a homotopy BV-algebra.\\
In Appendix we recall and specify some facts about the Hirsch and the homotopy G-algebras. 
In particular, we make explicit the signs related to our sign convention.

\section{Notations and preliminaries}\label{section1}

\subsection{Conventions and notations}
Let $R$ be a commutative ring.
A graded $R$-module $M$ is a family of $R$-modules $\{M_n\}$ where indices $n$ run through the integers.
The degree of $a \in M_n$ is denoted by $|a|$, so here $|a|=n$.
The $r$-suspension $s^{r}$ is defined by $(s^{r}M)_n=M_{n-r}$. \\
Algebras (respectively coalgebras) are understood as associative algebras (respectively coassociative coalgebras).\\
A unital $R$-algebra $(A,\mu,\eta)$ is called augmented if there is an algebra morphism $\epsilon:A\to R$. We denote by $\ov{A}$ the augmentation ideal $\Ker(\epsilon)$. \\
 For a coalgebra $(C,\nabla)$ the $n$-iterated coproduct is denoted by
\[
\nabla^{(n)}:C\to C^{\ot n+1} 
\]
 for $n\geq 1$.
We use the Sweedler notation, 
\[
\nabla(c)=c^1\ot c^2  ~~~\text{and}
~~~\nabla^{(n)}(c)=c^1\ot c^2 \ot \cdots \ot c^{n+1},
\]where we have omitted the sum.
A counital $R$-coalgebra $(C,\epsilon)$ is called coaugmented if there is a coalgebra morphism $\eta:R\to C$. 
We denote by $\ov{C}=\Ker(\epsilon)$ the reduced coalgebra with the reduced coproduct 
\[
\overline{\nabla}(c)={\nabla}(c)-c\ot 1 -1\ot c.  
\]

A (co)algebra $A$ is called connected if it is both (co)augmented and $A_n=0$ for $n\leq -1$ and $A_0\cong R$.\\
A (co)algebra, $A$ is called $n$-reduced if it is both connected and $A_k=0$ for $1\leq k\leq n$.\\
A coaugmented coalgebra $C$ is called conilpotent if the following filtration 
\begin{align*}
F_0C&:=R \\
F_rC&:=R\oplus \{c\in \ov{C}| \nabla^n(c)=0, n\geq r \}~~\text{for}~~ r\geq 1,
\end{align*}
is exhaustive, that is $C=\bigcup_r F_rC$.
\subsection{The bar and cobar constructions}

We refer to \cite[Chapter 2]{LV} for the background materials related to the bar and cobar constructions.

The cobar construction is a functor 
\begin{align*} 
 \Omega : DGC_1 &\to DGA_0 \\
  (C,d_C,\nabla_C) &\mapsto \Omega C =(T(\ds \ov{C}), d_{\Omega})
 \end{align*}
from the category of $1$-connected DG-coalgebras to the category of connected DG-algebras.
Here, $T(\ds \ov{C})$ is the free tensor algebra on the module $\ds \ov{C}$ and $d_{\Omega}$ is the unique derivation such that $d_{\Omega}(\ds c)=-\ds d_{\ov{C}}  (c)+ (\ds\ot \ds )\ov{\nabla}_{C} (c)$ for all $c\in  \ov{C}$.

The bar construction is a functor 
\begin{align*} 
 \Ba : DGA_0 &\to DGC_c \\
  (A,d_A,\mu_A) &\mapsto \Ba A =(T^c(\s \ov{A}), d_{\Ba})
 \end{align*}
from the category of connected DG-algebras to the category of conilpotent DG-coalgebras.
Here, $T^c(\s \ov{A})$ is the cofree tensor coalgebra on the module $\s \ov{A}$ and $d_{\Ba}$ is the unique coderivation with components 
\begin{center}
\begin{tikzpicture} [>=stealth,thick,draw=black!50, arrow/.style={->,shorten >=1pt}, point/.style={coordinate}, pointille/.style={draw=red, top color=white, bottom color=red}]
\matrix[row sep=9mm,column sep=16mm,ampersand replacement=\&]
{
  \node (a1){$T^c(\s \ov{A})$}; \& \node (b1) {$\s \ov{A}\oplus (\s \ov{A})^{\ot 2}$} ;\& \node (c1) {$\s \ov{A}$.} ;\\
}; 
\path
	  
   	 (a1)     edge[above,arrow,->>]      node {}  (b1)
	 (b1)     edge[above,arrow,->]      node[yshift=0.3cm] {\begin{scriptsize}$-\s d_{\ov{A}}\ds+ \s \mu_{\ov{A}}(\ds \ot \ds) $	                                                                                      \end{scriptsize}}  (c1)
     ; 
\end{tikzpicture}
\end{center}

We recall the bar-cobar adjunction,
\begin{theo}\cite[Theorem 2.2.9]{LV}\label{adjun}
For every augmented DG-algebra $\La$ and every conilpotent DG-coalgebra $C$ there exist natural bijections
\[
\Hom_{\text{DG-Alg}}(\Omega C,\La) \cong \Tw(C;\La) \cong \Hom_{\text{DG-Coalg}}(C,\Ba\La).
\]
\end{theo}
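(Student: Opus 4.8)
The plan is to factor both bijections through the set $\Tw(C;\La)$ of twisting morphisms, exploiting the universal properties of the free algebra $\Omega C$ and of the cofree conilpotent coalgebra $\Ba\La$. First I would equip the graded module $\Hom(C,\La)$ with its convolution DG-algebra structure: the product $\alpha\star\beta=\mu_\La(\alpha\ot\beta)\nabla_C$ and the differential $\partial(\alpha)=d_\La\alpha-(-1)^{|\alpha|}\alpha d_C$. By definition, $\Tw(C;\La)$ is the set of degree $-1$ maps $\alpha\colon C\to\La$ that vanish on the coaugmentation of $C$ and land in $\ov\La$, and that satisfy the Maurer--Cartan equation $\partial(\alpha)+\alpha\star\alpha=0$ (with the signs of \cite{LV}).

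For the bijection $\Hom_{\text{DG-Alg}}(\Omega C,\La)\cong\Tw(C;\La)$, I would use that $\Omega C=T(\ds\ov C)$ is free as a graded algebra. Hence a morphism of graded algebras $f\colon\Omega C\to\La$ is the same datum as a degree $0$ map $\ds\ov C\to\La$ on generators, equivalently the degree $-1$ map $\alpha\colon\ov C\to\ov\La$ given by $\alpha(c)=f(\ds c)$. It remains to translate the condition $d_\La f=f d_\Omega$. Since $f$ is multiplicative and both $d_\Omega$ and $d_\La$ are derivations, this chain-map condition is equivalent to its restriction to the generators $\ds c$; substituting the explicit formula $d_\Omega(\ds c)=-\ds d_{\ov C}(c)+(\ds\ot\ds)\ov\nabla_C(c)$ and using $f(\ds c'\cdot\ds c'')=\alpha(c')\alpha(c'')$ turns it into exactly the Maurer--Cartan equation for $\alpha$. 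Thus $f\mapsto\alpha$ is a well-defined bijection onto $\Tw(C;\La)$.

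The bijection $\Tw(C;\La)\cong\Hom_{\text{DG-Coalg}}(C,\Ba\La)$ is the dual argument, where the decisive input is the universal property of the cofree conilpotent coalgebra $\Ba\La=T^c(\s\ov\La)$. Because $C$ is conilpotent, a morphism of graded coalgebras $g\colon C\to\Ba\La$ is uniquely determined by its corestriction $\mathrm{pr}\circ g\colon C\to\s\ov\La$ to the cogenerators, and conversely any such degree $0$ corestriction $\s\alpha$ lifts to a coalgebra morphism, given on $\ov C$ by $\sum_{k\geq 1}(\s\alpha)^{\ot k}\nabla_C^{(k-1)}$, the sum being finite on each element precisely by conilpotency. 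Reading off the chain-map condition $d_\Ba g=g d_C$ on cogenerators, using the stated component $-\s d_{\ov\La}\ds+\s\mu_{\ov\La}(\ds\ot\ds)$ of $d_\Ba$, again reduces to the Maurer--Cartan equation for the degree $-1$ map $\alpha$. Naturality in $C$ and in $\La$ then follows at once, since on both sides the correspondence is described on (co)generators, and pre/post-composition with DG-(co)algebra morphisms corresponds to the evident operation on twisting morphisms.

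I expect the main obstacle to be the sign bookkeeping: each passage through the suspension $\s$ or the desuspension $\ds$ introduces Koszul signs, and verifying that the (co)derivation compatibilities collapse to the single Maurer--Cartan equation with consistent signs is the delicate part. A secondary point requiring care is the conilpotency hypothesis on $C$, which is exactly what makes the lift $\sum_{k\geq 1}(\s\alpha)^{\ot k}\nabla_C^{(k-1)}$ well defined and thereby guarantees the coalgebra-side bijection; without it a cogenerator map need not extend to a coalgebra morphism. Since the statement is Theorem 2.2.9 of \cite{LV}, I would ultimately defer the complete sign verification to that reference.
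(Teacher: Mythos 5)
The paper gives no proof of this statement---it is quoted directly from \cite[Theorem 2.2.9]{LV}---so there is nothing internal to compare against; your sketch is the standard argument from that reference (freeness of $\Omega C = T(\ds \ov{C})$ as a graded algebra on one side, cofreeness of $T^c(\s \ov{\La})$ among conilpotent coalgebras on the other, with both chain-map conditions collapsing to the twisting/Maurer--Cartan equation in the convolution algebra $\Hom(C,\La)$) and is correct in outline, including the correct identification of conilpotency as the hypothesis that makes the lift $\sum_{k\geq 1}(\s\alpha)^{\ot k}\nabla_C^{(k-1)}$ well defined. The only friction with the paper's own conventions is that its definition of $\Tw(C;\La)$ declares twisting cochains to be of degree $1$ where you (following \cite{LV}) take degree $-1$; this is just the opposite (co)homological grading convention and does not affect the argument.
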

The set $\Tw(C;\La)$  of twisting cochains from $C$ to $\La$ is the set of degree $1$ linear maps $f:C\to \La$ verifying the twisting condition: $\partial f:=df +fd =- \mu_{\La}(f\ot f)\nabla_C$.

\subsection{Hopf algebras}

\begin{de}\label{convolution}
For a  DG-bialgebra $(\mathcal{H},d,\mu,\eta,\nabla,\epsilon)$ an \textbf{antipode} $S:\mathcal{H} \to \mathcal{H}$ is a chain map which is the inverse of the identity in the convolution algebra $\Hom(\mathcal{H},\mathcal{H})$; the convolution product being $f\smile g =\mu(f\ot g)\nabla$. Explicitly, $S$ satisfies 
\begin{equation*}
S(a^1)a^2=\eta\epsilon(a)=a^1S(a^2) 
\end{equation*}
for all $a\in \mathcal{H}$. 
\end{de}
\begin{de}
 A \textbf{DG-Hopf algebra} $(\Hopf,d,\mu,\eta,\nabla,\epsilon,S)$ is a DG-bialgebra $\Hopf$ endowed with an antipode $S$.
If moreover, the antipode is involutive (i.e. $S^2=id$) we call $\Hopf$ an \textbf{involutive DG-Hopf algebra}.
\end{de}
An antipode satisfies the following properties.
 \begin{prop}\label{antip properties}\cite[Proposition 4.0.1]{Sweedler}
\renewcommand{\theenumi}{\roman{enumi}}
\begin{enumerate}
\item $S(a^2)\otimes S(a^1) =(-1)^{|a^1||a^2|} S(a)^1\otimes S(a)^2$  ~~\text{(coalgebra antimorphism)}. \label{eqantipode2}
\item $S(\eta(1))=\eta(1)$ ~~\text{(unital morphism)}.
\item $S(ab)=(-1)^{|a||b|}S(b)S(a)$   ~~\text{(algebra antimorphism)}.\label{eqantipode3}
\item $\epsilon(S(a))=\epsilon(a)$  ~~\text{(counital morphism)}. \label{eqantipode4}
\item The following equations are equivalent:
\begin{enumerate}
\item $S^2=S\circ S=id$;
 \item  $S(a^2)a^1=\eta\epsilon(a)$;
\item $a^2S(a^1)=\eta\epsilon(a)$.
\end{enumerate}
\item If $\mathcal{H}$ is commutative or cocommutative, then $S^2=id$.
\end{enumerate}
\end{prop}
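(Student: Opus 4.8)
The plan is to derive every item from the single defining fact recorded in Definition~\ref{convolution}: the antipode $S$ is the two-sided inverse of $\mathrm{id}_{\mathcal{H}}$ in the convolution algebra $(\Hom(\mathcal{H},\mathcal{H}),\smile)$, whose unit is $\eta\epsilon$. The recurring mechanism is \emph{uniqueness of convolution inverses} in a suitable $\Hom$-algebra, combined with a careful bookkeeping of Koszul signs. Since the statements are the classical ones of \cite[Proposition 4.0.1]{Sweedler}, the only genuinely new work is checking that the signs displayed in (i) and (iii) are exactly those produced by the graded symmetry isomorphism $\tau$.

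For the anti-comultiplicativity (i) and anti-multiplicativity (iii) I would argue by inverse-matching. For (iii), equip $\Hom(\mathcal{H}\ot\mathcal{H},\mathcal{H})$ with the convolution product coming from the tensor-product coalgebra structure on $\mathcal{H}\ot\mathcal{H}$ and the algebra structure on $\mathcal{H}$. One checks directly, using that $\mu$ is a morphism of coalgebras together with the antipode axioms, that $S\circ\mu$ is a two-sided convolution inverse of $\mu$. I would then show that the signed composite $\nu$ defined by $\nu(a\ot b)=(-1)^{|a||b|}S(b)S(a)$ is also a convolution inverse of $\mu$; uniqueness forces $\nu=S\circ\mu$, which is precisely (iii). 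Dually, for (i) I would work in $\Hom(\mathcal{H},\mathcal{H}\ot\mathcal{H})$ and show that both $\nabla\circ S$ and the signed composite $\tau\circ(S\ot S)\circ\nabla$ are two-sided inverses of $\nabla$, yielding the sign $(-1)^{|a^1||a^2|}$.

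Items (ii) and (iv) are immediate: evaluating the axiom $a^1S(a^2)=\eta\epsilon(a)$ on the unit $\eta(1)$, a grouplike element with $\nabla\eta(1)=\eta(1)\ot\eta(1)$, gives $S(\eta(1))=\eta(1)$; and applying the (multiplicative) counit $\epsilon$ to the same axiom gives $\epsilon(S(a))=\epsilon(a)$. For the equivalences in (v), the cleanest route is again uniqueness. Reading the twisted identity (b) as the statement that the coopposite convolution $S\star\mathrm{id}$ equals $\eta\epsilon$, I would rewrite $S\smile S^2$ with the anti-multiplicativity (iii) to obtain $(S\smile S^2)(a)=S\big(\sum(-1)^{|a^1||a^2|}S(a^2)a^1\big)$; under hypothesis (b) this collapses to $\eta\epsilon(a)$, so $S^2$ is a right convolution inverse of $S$. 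Since $\mathrm{id}_{\mathcal{H}}$ is already the two-sided inverse of $S$, uniqueness yields $S^2=\mathrm{id}$, giving $(b)\Rightarrow(a)$; the converse applies $S$ to the antipode axiom and substitutes $S^2=\mathrm{id}$, and $(a)\Leftrightarrow(c)$ is the mirror argument. Finally (vi) follows from (v): when $\mathcal{H}$ is cocommutative, $\tau\nabla=\nabla$ turns the twisted identity (b) back into the defining axiom $\sum S(a^1)a^2=\eta\epsilon(a)$, and when $\mathcal{H}$ is commutative the same happens for (c).

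The main obstacle I anticipate is purely the sign discipline. Verifying that $\nu$ and $\tau\circ(S\ot S)\circ\nabla$ really are convolution inverses requires expanding the convolution products on $\mathcal{H}\ot\mathcal{H}$ with the Koszul rule applied at each transposition, and confirming that the accumulated signs telescope to exactly $(-1)^{|a||b|}$ and $(-1)^{|a^1||a^2|}$; likewise, the computation of $S\smile S^2$ must match the implicit reordering sign carried by the expression $S(a^2)a^1$ in (b) against the sign supplied by (iii). Once these sign computations are pinned down, the remaining items are formal consequences of the uniqueness of inverses.
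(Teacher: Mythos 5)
Your proposal is correct and is essentially the standard convolution-inverse argument of \cite[Proposition 4.0.1]{Sweedler}, which is exactly what the paper relies on: the paper gives no proof of its own and simply cites that reference. The only point needing care, which you already flag, is the Koszul sign bookkeeping in (i), (iii) and in matching the implicit reordering sign in the expressions $S(a^2)a^1$ and $a^2S(a^1)$; once those are pinned down your uniqueness-of-inverses scheme goes through verbatim in the graded setting.
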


\section{Homotopy structures on the cobar construction}

\subsection{Homotopy G-algebra on the cobar construction}
We present the homotopy G-algebra structure on the cobar construction of a $1$-reduced DG-bialgebra given in \cite{Kadeishvili}. The homotopy G-algebras are also known as the Gerstenhaber-Voronov algebras defined in \cite{GerstVoronov}.
We define the Hirsch algebras and the homotopy G-algebras via the bar construction.

\begin{de}
A \textbf{Hirsch algebra} $\La$ is the data of a connected DG-algebra $(\La,d,\mu_{\La})$ together with a map $\mu:\Ba\La \ot \Ba\La \to \Ba\La$ making $\Ba\La$ into an associative unital  DG-bialgebra. 
\end{de}
For a connected DG-algebra $\La$ a DG-product $\mu:\Ba\La \ot \Ba\La \to \Ba\La$ corresponds to a twisting cochain $\wid{E}\in \Tw(\Ba\La\ot \Ba\La, \La)$, cf. Theorem \ref{adjun}.  After a correct use of desuspensions (see sign convention below), the latter is a family of operations $\{E_{i,j}\}_{i,j\geq 1}$, $E_{i,j}:\ov{\La}^{\ot i} \ot \ov{\La}^{\ot j} \to \ov{\La}$ satisfying some relations, see \cite{Kadeishvili}.
We denote by $\mu_E:\Ba\La \ot \Ba\La \to \Ba\La$ such a DG-product.\\

A Hirsch algebra is a particular $B_{\infty}$-algebra whose underlying $A_{\infty}$-algebra structure is a DGA structure. 
Recall that the bar construction $\Ba \La$ of a DG-algebra $\La$ is the cofree tensor coalgebra $T^c(s\ov{\La})$ together with the derivation  induced by the differential and the product of $\La$.
\begin{center}
\begin{tabular}[h]{|ll|ll|}
 \hline
 & $\La$ & &$T^c(s{\ov{\La}})$ \\ \hline
DGA & $(d_{\La},\mu_{\La})$ & DGC &  $d$ induced by $d_{\La}$ and $\mu_{\La}$\\ \hline
$A_{\infty}$ & $\{\mu_k\}_{k\geq 0}$ & DGC &  $d$ induced by $\mu_k$ for $k\geq 0$\\ \hline
Hirsch & $(d_{\La},\mu,\{E_{i,j}\}_{i,j\geq 1})$& DG-bialgebra &  $d$ induced by $d_{\La}$ and $\mu$ ; DG-product $\mu_E$\\ \hline
$B_{\infty}$ &  $(\{\mu_k\}_{k\geq 0},\{E_{i,j}\}_{i,j\geq 1})$ &  DG-bialgebra &  $d$ induced by $\mu_k$ for $k\geq 0$ ; DG-product $\mu_E$\\ \hline 
 \end{tabular}
\end{center}

\begin{de}\label{def: homotopy G}
 A \textbf{homotopy G-algebra} $(\La,d_{\La},\mu_{\La},\{E_{1,j}\}_{j\geq 1})$
 is a Hirsch algebra $(\La,d_{\La},\mu_{\La},\{E_{i,j}\}_{i,j\geq 1})$ such that $E_{i,j}=0$ for $i\geq 2$.  
\end{de}

\subsubsection*{Sign convention} 
Let  $\wid{E}\in\Tw(\Ba\La\ot \Ba\La,\La)$ be a twisting cochain.
 Its $(i,j)$-component is $\wid{E}_{i,j}:(\s\ov{\La})^{\ot i}\ot (\s\ov{\La})^{\ot j} \to \ov{\La}$. We denote by  $E_{i,j}:\ov{\La}^{\ot i}\ot \ov{\La}^{\ot j} \to \ov{\La}$ the component 
\begin{multline}\label{conv}
 E_{i,j}(a_1\ot ...\ot a_i\ot a_{i+1} \ot...\ot a_{i+j})\\
:=(-1)^{\sum_{s=1}^{i+j-1}|a_s|(i+j-s)} \wid{E}_{i,j}(\s^{\ot i}\ot \s^{\ot j})(a_1\ot ...\ot a_i\ot a_{i+1} \ot...\ot a_{i+j})\\
=\wid{E}_{i,j}(\s a_1\ot ...\ot \s a_i\ot \s a_{i+1} \ot...\ot \s a_{i+j}).
\end{multline}
The degree of $E_{i,j}$ is $i+j-1$.
We denote $E_{i,j}( (a_1\ot ...\ot a_{i}) \ot (b_1\ot ...\ot b_{j}))$ by $E_{i,j}(a_1,...,a_{i};b_1,...b_{j})$. 

Let $B$ be a $1$-reduced DG-bialgebra.
For $a\in B$ and $\overline{b}:=\ds b_1\ot ... \ot \ds b_s \in \Omega B$, we set 
\[
 a\diamond \overline{b}=\ds (a^1b_1)\ot ...\ot \ds(a^sb_s).
\]
\begin{prop}{\cite{Kadeishvili}}
 Let $B$ be a $1$-reduced DG-bialgebra. Then the cobar construction $(\Omega B,d_{\Omega})$ together with the operations  $E_{1,k}$, $k\geq 1$ defined by 
\begin{multline}\label{E1k}
 E_{1,k}(\ds a_1\ot ...\ot \ds a_n;\overline{b}_1,...,\overline{b}_k)=\\ \sum_{1\leq i_1\leq i_2\leq ...\leq i_k\leq n} \pm \ds a_1\ot ...\ot \ds a_{i_1-1}\ot a_{i_1}\diamond\overline{b}_1\ot ...\ot \ds a_{i_k-1}\ot a_{i_k}\diamond\overline{b}_k \ot \ds a_{i_k+1}\ot ...\ot \ds a_{n}
\end{multline} when $n\geq k$ and zero when $n<k$ 
is a homotopy G-algebra.
\end{prop}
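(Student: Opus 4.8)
The plan is to exploit the equivalence recalled just before Definition \ref{def: homotopy G}: a homotopy G-algebra structure on $\Omega B$ is precisely a DG-bialgebra structure on its bar construction $\Ba(\Omega B)$ whose defining twisting cochain has all components $E_{i,j}=0$ for $i\geq 2$. Accordingly, I would assemble the operations $E_{1,k}$ of \eqref{E1k}, together with the vanishing higher components, into a single degree-one map $\widetilde{E}:\Ba(\Omega B)\ot\Ba(\Omega B)\to \Omega B$, and regard it as the corestriction of a product $\mu_E:\Ba(\Omega B)\ot\Ba(\Omega B)\to\Ba(\Omega B)$. Since $\Ba(\Omega B)$ is the cofree tensor coalgebra, $\mu_E$ is automatically a morphism of graded coalgebras for the deconcatenation coproduct, so the remaining content is to prove three things: that $\widetilde{E}\in\Tw(\Ba(\Omega B)\ot\Ba(\Omega B),\Omega B)$, i.e. $\mu_E$ is a chain map (the twisting condition of Theorem \ref{adjun}); that $\mu_E$ is associative; and that it is unital. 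The $1$-reducedness of $B$ guarantees that $\Omega B$ is defined and that the sum in \eqref{E1k} is finite, so all the maps are well posed.

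The heart of the verification is to translate each requirement into an identity among the $E_{1,k}$ and to read it off from the bialgebra axioms of $B$, the key input being packaged in the diamond operation $a\diamond\overline{b}=\ds(a^1b_1)\ot\cdots\ot\ds(a^sb_s)$, which combines the coproduct and the product of $B$. The distributivity of $E_{1,k}$ over the concatenation product of $\Omega B$, namely $E_{1,n}(\overline{u}\cdot\overline{v};\overline{b}_1,\ldots,\overline{b}_n)=\sum_{p+q=n}\pm E_{1,p}(\overline{u};\overline{b}_1,\ldots,\overline{b}_p)\cdot E_{1,q}(\overline{v};\overline{b}_{p+1},\ldots,\overline{b}_n)$, is essentially forced by the insertion-sum structure of \eqref{E1k}: splitting the ordered set $1\leq i_1\leq\cdots\leq i_n$ of insertion points across the two subwords $\overline{u},\overline{v}$ produces exactly these terms, and no bialgebra input is needed. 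The differential compatibility is then obtained by unwinding $d_\Omega$, which is the derivation built from $d_B$ and the reduced coproduct $\overline{\nabla}_B$, on each slot and checking that the resulting terms match the quadratic expression $-\mu_{\Omega}(\widetilde{E}\ot\widetilde{E})\nabla$; here coassociativity of $\nabla_B$ organises the iterated diamonds.

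The step I expect to be the genuine obstacle is the associativity of $\mu_E$, that is the brace (pre-Jacobi) relation expressing $E_{1,k}(E_{1,\ell}(\ds a;-);-)$ as a sum $\sum\pm E_{1,m}(\ds a;\ldots,E_{1,\ast}(-;-),\ldots)$ of nested insertions. Composing two diamonds forces the coproduct of a product $a^ib_j$, so the bialgebra compatibility (the fact that $\nabla_B$ is an algebra morphism) together with associativity of $\mu_B$ is exactly what is needed to reorganise the inner insertions; the combinatorial difficulty is matching the ordered index set $1\leq i_1\leq\cdots\leq i_k\leq n$ of \eqref{E1k} against which inner brace lands in which slot, including the degenerate cases $i_j=i_{j+1}$. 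Running through all of this is the sign bookkeeping dictated by the convention \eqref{conv}, since every desuspension $\ds$ and every transposition of a $\overline{b}_j$ past an $a_s$ contributes a Koszul sign. I would control these signs once and for all by computing with the desuspended operations $\widetilde{E}_{1,k}$ on $(\s\overline{\Omega B})^{\ot\bullet}$, where the coderivation and coalgebra-morphism formalism of the bar construction absorbs them, and only translate back to the $E_{1,k}$ at the end, deferring the explicit sign computation to the Appendix.
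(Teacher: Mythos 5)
Your plan coincides with the route the paper itself takes: the proposition is quoted from \cite{Kadeishvili}, and the Appendix records exactly the identities you propose to verify --- the unit condition, the associativity relation \eqref{assocGerst}, and the Leibniz/twisting relation \eqref{dgmap} --- together with the explicit sign construction of $E_{1,1}$ via the convention \eqref{conv}. You correctly isolate the three real inputs (cofreeness of $\Ba(\Omega B)$ making $\mu_E$ automatically a coalgebra map, the compatibility of $\nabla_B$ with $\mu_B$ for the nested-diamond associativity, and the Koszul sign bookkeeping handled on the suspended operations), so the proposal is sound and essentially the paper's approach.
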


In particular the operation $E_{1,1}:\ov{\Omega B} \ot \ov{\Omega B} \to \Omega B$ is given by 
\begin{multline}\label{E11}
 E_{1,1}(\ds a_1 \ot ...\ot\ds a_m ; \ds b_1 \ot ... \ot \ds b_n)\\
:= \sum_{l=1}^{m} (-1)^{\gamma}\ds a_1 \ot... (a_l^1\cdot b_1)\ot ...\ot \ds(a_l^n\cdot b_n)\ot ...\ot \ds a_m
\end{multline}
with
\begin{align*}
 \gamma &=\sum_{u=1}^{n}|b_u|\left(\sum_{s=l+u}^{m}|a_s|+m-l-u+1\right)+  
\kappa_n(a_l)+ \sum_{u=2}^{n}(|a_l^u|-1)\left(\sum_{s=1}^{u-1}|b_s|-u+1 \right)\\
&+ \sum_{s=1}^{n}(|a_l^s|-1)(2n-2s+1) +\sum_{s=1}^{n-1}(|a_l^s|+|b_s|)(n-s),
 \end{align*}
where
\begin{align*}
 \kappa_n(a):=\begin{cases}
              \sum_{1\leq 2s+1\leq n}|a^{2s+1}|     & \text{if $n$ is even} ;\\  
	      \sum_{1\leq 2s\leq n}|a^{2s}|     & \text{if $n$ is odd} .\\
	      \end{cases}
\end{align*}
We postpone the construction of this operation to Appendix.

\subsection{Homotopy BV-algebra on the cobar construction}
We define a homotopy BV-algebra as a homotopy G-algebra (in the sense of Gerstenhaber-Voronov \cite{GerstVoronov}) together with a BV-operator. 
The main example, that we will discuss in details, is the cobar construction of a $1$-reduced involutive DG-Hopf algebra $\Hopf$.
The BV-operator on $\Omega \Hopf$ is the Connes-Moscovici operator defined in \cite{ConnesMosc}.

\begin{de}
 A \textbf{homotopy BV-algebra} $\La$ is a homotopy G-algebra together with a degree one DG-operator $\Delta:\La \to \La$ subject to the relations:
\begin{align*}
\Delta^2&=0;\\
 \{a;b\}&=(-1)^{|a|}\bigl( \Delta(a\cdot b)-\Delta(a)\cdot b -(-1)^{|a|}a\cdot \Delta(b)\bigr)\\
         &~~~~~ + d_{\La}H(a;b)+H(d_{\La}a;b) +(-1)^{|a|} H(a;d_{\La}b)~~\text{for all} ~~a,b\in \La,
\end{align*}     
where $\{-;-\}$ is the Gerstenhaber bracket $\{a;b\}=E_{1,1}(a; b)- (-1)^{(|a|+1)(|b|+1)}E_{1,1}(b; a)$ and $H:\La \ot \La \to \La$ is a degree $2$ linear map.
\end{de} 

A straightforward calculation shows that:
\begin{align*}
 \Delta(\{a;b\})&=\{\Delta(a);b\}+(-1)^{|a|+1}\{a;\Delta(b)\} - \partial \overline{H}(a;b),
\end{align*}
where 
\[
 \overline{H}(a;b):=\Delta H(a;b)+ H(\Delta(a);b) + (-1)^{|a|}H(a;\Delta(b))~~ \forall a,b\in \La.
\]
Therefore $\Delta$ is a derivation for the bracket $\{-;-\}$ 
if $\Delta$  is a derivation for $H$, or more generally 
if $\Delta$  is a derivation for $H$ up to a homotopy.
\\
In the example we consider hereafter, the homotopy $H$ is itself antisymmetric, more precisely, we have
$H({a};{b}):=H_1({a};{b})-(-1)^{(|{a}|+1)(|{b}|+1)}H_1({b};a)$. 
It turns out that the operator $\Delta$ is not a derivation for $H$ in general.
However we do not know yet if there exists a homotopy $H'$ such that $\overline{H}=\partial H'$.

A. Connes and H. Moscovici defined in \cite{ConnesMosc} a boundary map on the cobar construction of an involutive Hopf algebra $\Hopf$. 
This operator, hereafter called the Connes-Moscovici operator, induces a BV-operator on the homology $H_*(\Omega \Hopf)$.
 More precisely,
let $(\Hopf,d,\mu,\eta,\nabla,\epsilon,S)$ be an involutive DG-Hopf algebra, that is with an involutive antipode $S$. 
With our sign convention, the Connes-Moscovici operator 
\[
 \Delta: \cobH \to \cobH
\]
is zero on both  $R=(\ds\ov{\Hopf})^{\ot 0}$ and $(\ds\ov{\Hopf})^{\ot 1}$, and is given on the $n$-th component $(\ds\ov{\Hopf})^{\ot n}$, $n\geq 2$ by:
\begin{equation}\label{BVop}
 \Delta(\ds a_1\ot \ds a_2\ot...\ot \ds a_n)=
\sum_{i=0}^{n-1}(-1)^{i}\pi_n\tau_n^{i}(\ds a_1\ot \ds a_2\ot... \ot  \ds a_n),
\end{equation}
where $\tau_n$ is the cyclic permutation
\begin{align*}
\tau_n(\ds a_1\ot \ds a_2 \ot ...\ot  \ds a_n):= (-1)^{(|a_1|-1)\left(\sum_{i=2}^{n}|a_i|-1\right)}\ds a_2 \ot\ds a_3 \ot...\ot \ds a_n\ot \ds a_1,
\end{align*} and 
\begin{align*}
\pi_n:=(\ds\mu)^{\ot n-1}\tau_{n-1,n-1}(S^{\ot n-1}\ot 1^{\ot n-1})((\tau \nabla)^{(n-2)}\ot1^{n-1})\s^{\ot n},\\
\tau_{n,n}(a_1\ot a_2 \ot ...\ot a_n\ot b_1 \ot ...\ot b_n):=(-1)^{\sum_{i=1}^{n-1}|b_i|(\sum_{j=i+1}^n|a_j|)} a_1\ot  b_1 \ot  a_2\ot b_2\ot... \ot a_n \ot b_n.
\end{align*}
More explicitly, 
\begin{equation}
  \Delta(\ds a_1\ot \ds a_2\ot...\ot \ds a_n)=
\sum_{k=1}^{n}\pm \ds S(a_k^{n-1})a_{k+1}\ot \ds S(a_{k}^{n-2})a_{k+2}\ot... \ot  \ds S(a_k^{1})a_{k-1}.
\end{equation}
The involutivity of the antipode of $\Hopf$ makes $\Delta$ into a square zero chain map.
L. Menichi proved \cite[Proposition 1.9]{MenichiBV} that for a unital (ungraded) Hopf algebra $\Hopf$, the
Connes-Moscovici operator induces a Batalin-Vilkovisky algebra on the homology of the
cobar construction $H_*(\Omega\Hopf)$.
In our context,
\begin{prop}{\cite{MenichiBV}}\label{PropBV}
 Let $\Hopf$ be a $1$-reduced involutive DG-Hopf algebra. Then the cobar construction $(\Omega \Hopf,d_{\Omega})$ is a homotopy BV-algebra whose the BV-operator is defined in \eqref{BVop}.
\end{prop}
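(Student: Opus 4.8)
The plan is to verify directly the three conditions in the definition of a homotopy BV-algebra for the pair $(\cobH,\Delta)$, where the homotopy G-algebra structure is the one of \eqref{E1k} supplied by Kadeishvili and $\Delta$ is the Connes--Moscovici operator \eqref{BVop}. The first two conditions --- that $\Delta$ is a degree one chain map squaring to zero --- are exactly what is recorded in the running text just above the statement, so the heart of the proof is the homotopy-BV identity relating the Gerstenhaber bracket $\{-;-\}$ built from $E_{1,1}$ to the deviation of $\Delta$ from being a derivation of the cobar product, together with the explicit construction of the degree two homotopy $H$.

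First I would recall why $\Delta$ is a square-zero chain map, since this is where the hypothesis $S^2=\mathrm{id}$ is essential. Writing $\Delta=\sum_{i}(-1)^{i}\pi_n\tau_n^{i}$ on the $n$-th component, I would show that $\pi_n$ and the signed cyclic permutation $\tau_n$ satisfy the relations of a (para-)cyclic structure, so that $\Delta$ is Connes' boundary operator for a cyclic module built from $\cobH$. The compatibility $d_{\Omega}\Delta+\Delta d_{\Omega}=0$ then follows from the simplicial-type identities between $\pi_n$ and the cobar differential, and the key relation $\Delta^2=0$ reduces to the cyclicity $\tau_n^{\,n}=\mathrm{id}$ (up to the Koszul sign). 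It is precisely this last relation that fails without $S^2=\mathrm{id}$: iterating $\tau_n$ reinserts the antipode, and only involutivity, Proposition \ref{antip properties}(v), closes the cycle.

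Next I would establish the homotopy-BV identity. I would take the homotopy in the antisymmetric form $H(a;b):=H_1(a;b)-(-1)^{(|a|+1)(|b|+1)}H_1(b;a)$ announced in the text, where $H_1$ is assembled from the same cyclic and antipode ingredients as $\Delta$ but carrying one extra product insertion. Expanding both sides on generators $\ds a_1\ot\cdots\ot\ds a_m$ and $\ds b_1\ot\cdots\ot\ds b_n$, the combination $\Delta(a\cdot b)-\Delta(a)\cdot b-(-1)^{|a|}a\cdot\Delta(b)$ isolates exactly those cyclic terms in which the product $\mu$ straddles the junction between the $a$-word and the $b$-word. I would then show, by telescoping the cyclic sum, that these straddling terms reassemble --- modulo the mixed boundary $d_{\Omega}H(a;b)+H(d_{\Omega}a;b)+(-1)^{|a|}H(a;d_{\Omega}b)$ --- into the explicit formula \eqref{E11} for $E_{1,1}(a;b)$ and its transpose, which is precisely $\{a;b\}$.

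The main obstacle is the sign bookkeeping. Menichi's result \cite{MenichiBV} is stated for an ungraded Hopf algebra, whereas here every ingredient --- the cyclic relation for $\tau_n$, the antipode antimorphism properties of Proposition \ref{antip properties}, and the operation $E_{1,1}$ of \eqref{E11} --- must be re-derived with the Koszul signs fixed by the conventions of Section \ref{section1} and the Appendix. The delicate point is the cancellation of signs in the telescoping step: one must check that the straddling terms of $\Delta(a\cdot b)$, after applying the antipode-antimorphism identity, line up termwise with the $E_{1,1}$-summands of \eqref{E11}, the residual terms being absorbed into $d_{\Omega}H+H d_{\Omega}$. Tracking the involutivity hypothesis throughout guarantees both $\Delta^2=0$ and the correct pairing of the antipode-reversal terms.
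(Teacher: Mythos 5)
Your outline is sound as far as it goes and it does identify the right ingredients --- the antisymmetrised homotopy $H(a;b)=H_1(a;b)-(-1)^{(|a|+1)(|b|+1)}H_1(b;a)$, the fact that $\Delta^2=0$ reduces to the cyclic relation which only closes when $S^2=\mathrm{id}$, and the need to match the ``straddling'' terms of $\Delta(a\cdot b)$ against the formula \eqref{E11} for $E_{1,1}$ --- but it is a genuinely different route from the paper's, and it defers the hardest step. You propose to re-derive Menichi's identity from scratch in the graded setting, and the entire content of that derivation is hidden in your phrase ``telescoping the cyclic sum'' together with ``sign bookkeeping''; that computation is precisely Proposition 1.9 of \cite{MenichiBV} and is not routine. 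The paper instead avoids redoing it: it splits the cobar differential as $d_{\Omega}=d_0+d_1$, with $d_0$ induced by the internal differential of $\Hopf$ and $d_1$ by its coproduct, cites Menichi's Proposition 1.9 to obtain the BV identity with $d_1$ in place of $d_{\Omega}$ (this is the part of the structure governed purely by the Hopf-algebra data, which is what Menichi's ungraded statement addresses), and then observes that every operator appearing in the identity commutes with $d_0$, so $d_1$ may be replaced by $d_{\Omega}$. That commutation argument is the one idea your plan is missing: it converts the whole ``re-derive everything with Koszul signs'' burden into a single naturality check against the linear part of the differential. Your approach would buy a self-contained, citation-free proof; the paper's buys brevity and a clean separation between the Hopf-algebraic content (delegated to \cite{MenichiBV}) and the differential-graded content (commutation with $d_0$). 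If you pursue the direct computation you must actually carry out the termwise matching with \eqref{E11}, since that is where the proof lives.
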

\begin{proof}
 Let us define Menichi's homotopy $H(\ov{a};\ov{b}):=H_1(\ov{a};\ov{b})-(-1)^{(|\ov{a}|+1)(|\ov{b}|+1)}H_1(\ov{b};\ov{a})$. 
\begin{multline}
 H_1(\ds a_1\ot... \ot \ds a_m ;\ds b_1\ot... \ot \ds b_n)\\
:= \sum_{1\leq j\leq p\leq m-1}(-1)^{\xi_j+n+1} \pi_{m+n-1}^s\tau_{m+n-1}^{s,n+m-1-j}\rho_{n+m}^{(p-j+1)}(\ds a_1\ot...\ot \ds a_m \ot \ds b_1\ot... \ot \ds b_n)
\end{multline}
with 
\begin{equation*}
 \xi_j=\left\{\begin{array}{rl}
       \displaystyle \sum_{s=1}^{m} |a_s| & \text{for}~ j=1;\\
	\displaystyle \sum_{s=m-j+1}^{m} |a_s| & \text{for}~ j>1.
       \end{array}
\right.
\end{equation*}
Where,
\begin{align*}
\pi_m^s&= \pi_m(\ds)^{\ot m}\\
\tau_m^{s,i}&:=(\tau_m^{s})^i \\
\tau_m( a_1\ot  a_2 \ot ...\ot a_m)&:= (-1)^{|a_1|\left(\sum_{i=2}^{n}|a_i|\right)} a_2 \ot a_3 \ot...\ot  a_m\ot a_1\\
\rho_{m+1}^{(i)}&:= (1^{\ot i-1}\ot \mu\ot 1^{\ot m-i})(1^{\ot i-1}\ot \tau_{m-i+1,1})s^{\ot m+1}\\
\tau_{k,1}( a_1\ot ...\ot  a_k \ot b)&:= (-1)^{|b|\left(\sum_{i=2}^{k}|a_i|\right)} a_1 \ot b\ot a_2 \ot...\ot  a_k.
\end{align*}
By Proposition  \cite[Proposition 1.9]{MenichiBV} we have
\begin{equation*}
 \{a;b\}=(-1)^{|a|}\bigl( \Delta(a\cdot b)-\Delta(a)\cdot b -(-1)^{|a|}a\cdot \Delta(b)\bigr) + d_{1}H(a;b)+H(d_{1}a;b) +(-1)^{|a|} H(a;d_{1}b)
\end{equation*}
on the cobar construction $(\Omega \Hopf, d_0+d_1)$, where $d_1$ is the quadratic part of the differential $d_{\Omega}$.
The operators involved in the above equation commute with $d_{0}$. Therefore we can replace $d_{1}$ by $d_{\Omega}$ in the previous equation. 
\end{proof}

\section[Involutivity of the antipode of $\Omega C$ in terms of homotopy G-coalgebra $C$]{Involutivity of the antipode of $\Omega C$ in terms of the homotopy G-coalgebra $C$}
A Hirsch coalgebra $C$ is the formal dual of a Hirsch algebra, that is it corresponds to a DG-coproduct $\nabla:\Omega C \to \Omega C \ot \Omega C$ making $\Omega C$ into a DG-bialgebra.
Baues' coproduct \cite{Bauesgeom} $\nabla_0: \cob \to \cob$ defined on the cobar construction of a simplicial set $X$ corresponds to a homotopy G-coalgebra structure $E^{k,1}$ on $C_*(X)$, that is a particular case of Hirsch coalgebra structure. 
With this example in mind we consider a homotopy G-coalgebra $(C,E^{k,1})$. 
Then there exists a unique antipode $S:\Omega C \to \Omega C$ on its cobar construction.
The purpose of this section is to give a criterion for the involutivity of the antipode $S$ in terms of the operations $E^{k,1}$. 
 This takes the form of a family of operations 
\[
O_n:\ds\ov{C}\to (\ds\ov{C})^{\ot n}, ~~n\geq 2.
\]
For convenience, we set $\wid{C}:=\ds\ov{C}$. Thus $O_n:\wid{C}\to \wid{C}^{\ot n}$, $n\geq 2$.
\begin{de}
A \textbf{Hirsch coalgebra} $C$ is the data of a $1$-reduced DG-coalgebra $(C,d,\nabla_C)$ together with a map $\nabla:\Omega C \to \Omega C \ot \Omega C$ making $\Omega C$ into a coassociative
 counital  DG-bialgebra. The corresponding operations on $C$ are denoted by $E^{i,j}:\wid{C}\to \wid{C}^{\ot i}\ot \wid{C}^{\ot j}$. The degree of $E^{i,j}$ is $0$.
 A \textbf{homotopy G-coalgebra} is a Hirsch coalgebra whose operations $E^{i,j}=0$ for $j\geq 2$.  
\end{de}
Let the cobar construction $(\Omega C,d,\mu_{\Omega},\nabla)$  be a DG-bialgebra. We can define the antipode 
\[
S:\Omega C \to \Omega C 
\]
by 
$S(\eta(1))=\eta(1)$ and for $\sigma\in \wid{C}_n$ with $n\geq 1$ by 
\begin{equation}\label{antpodeconst}
S(\sigma):=-\sigma -\mu_{\Omega}(S\ot 1)\overline{\nabla}(\sigma) 
\end{equation}
 which makes sense since 
\begin{equation*}
\overline{\nabla}(\sigma)\subset \bigoplus_{\substack{i+j=n\\ 0<i,j<n}} (\Omega C)_i\ot (\Omega C)_j.                          
\end{equation*}
Indeed, 
  $\mu_{\Omega}(S\ot 1)\nabla(\eta(1))=\eta\epsilon\eta(1)=\eta(1)$ gives immediately that $S(\eta(1))=\eta(1)$.  
 Moreover, since $\mu_{\Omega}(S\ot 1)\nabla(\sigma)=0$ for all $\sigma\in \wid{C}_n$ with $n\geq 1$, 
we have
\begin{align*}
\mu_{\Omega}(S\ot 1)\nabla(\sigma)&=\mu_{\Omega}(S\ot 1)\ov{\nabla}(\sigma)+ \mu_{\Omega}(S(\sigma)\ot \eta(1)+ S(\eta(1))\ot \sigma)\\
&=\mu_{\Omega}(S\ot 1)\ov{\nabla}(\sigma)+ S(\sigma)+\sigma=0.
\end{align*} 

\begin{rmq}
Equivalently, we can define $S$, see \cite[section 1.3.10 p.15]{LV},  as the geometric serie $(Id)^{\smile -1}=\sum_{n\geq 0} (\eta\epsilon-Id)^{\smile n}$ with $(\eta\epsilon-Id)^{\smile 0}=\eta\epsilon$ and where $\smile$ denote the convolution product from Definition \ref{convolution}. Note that this sum is finite when it is evaluated on an element. This presentation is combinatorial and gives for $[\sigma] \in (\ds \ov{C})_n$,
 \begin{equation}
  \begin{split}
     S([\sigma]) = \eta\epsilon([\sigma])+ (\eta\epsilon-Id)([\sigma])+ \mu_{\Omega}\overline{\nabla}([\sigma]) - \mu_{\Omega}^{(2)} \overline{\nabla}^{(2)}([\sigma])+...+(-1)^{n-1}\mu_{\Omega}^{(n)} \overline{\nabla}^{(n)}([\sigma]).
 \end{split}
\end{equation}
\end{rmq}

By the \ref{eqantipode3} of Proposition \ref{antip properties} the antipode is an algebra antimorphism that  is an algebra morphism from $(\Omega C,\mu_{\Omega })$ to $\Omega C_{(12)}:=(\Omega C,\mu_{{\Omega}_{(12)}}:=\mu_{\Omega }\tau)$. Moreover, it is also a DG-map, therefore it corresponds to a twisting cochain $F \in \Tw({C},\Omega C_{(12)})$. An antipode is determined by the underlying bialgebra structure. Here the latter is equivalent to a homotopy G-algebra structure $E^{k,1}$ on $C$. We make explicit the twisting cochain $F$  in terms of $E^{k,1}$.\\
We recall the notation $\wid{C}:=\ds\ov{C}$. We write $F^i:\wid{C}\to \wid{C}^{\ot i}$ for the $i$-th component of $F$.
The relation\footnote{We can also consider $\mu_{\Omega }(S\ot 1)\nabla = \eta\epsilon$. It gives equivalent $F^n$ but with a more complicated description because of the apparition of permutations. For example $F^3=-(E^{1,1}\ot 1)E^{1,1}- (\tau\ot 1)E^{2,1}$.}  $\mu_{\Omega }(1\ot S)\nabla = \eta\epsilon$ gives
\begin{align}
F^1&=-Id_{\wid{C}};\\
F^n&=\sum_{\substack{1\leq s \leq n-1 \\ n_1+...+n_s=n-1 \\n_i\geq 1}}(-1)^{s+1}(1^{\ot n_1+...+n_{s-1}}\ot E^{n_s,1})...(1^{\ot n_1}\ot E^{n_2,1})E^{n_1,1}, ~~n\geq 2. \label{Fn}
\end{align}
Let the operation $E^{n_i,1}:\wid{C}\to \wid{C}^{\ot n_i}\ot  \wid{C}$ be represented by the tree in Figure \ref{fig1}. 
Then the summands of the equation \eqref{Fn} are represented by the tree in Figure \ref{fig2}. 
\begin{center}
\begin{minipage}{.45\textwidth}
\begin{figure}[H]
\begin{tikzpicture}
[level distance=8mm,growth parent anchor=north,
every node/.style={anchor=north,rectangle,draw}
every child node/.style={anchor=south},
level 1/.style={sibling distance=30mm},
level 2/.style={sibling distance=5mm},
level 3/.style={sibling distance=15mm},
level 4/.style={sibling distance=5mm},
level 5/.style={sibling distance=10mm},
level 6/.style={sibling distance=5mm}
]
\coordinate 
 child { child {node (l2) {$1$}} child{node{$\cdots$}} child {node (r2) {$n_i$}} } 
child { child }
;
\end{tikzpicture} 
\caption{The operation $E^{n_i,1}$.}
\label{fig1}
\end{figure}
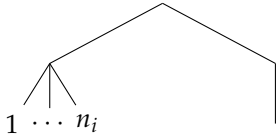
\end{minipage}
\begin{minipage}{.45\textwidth}
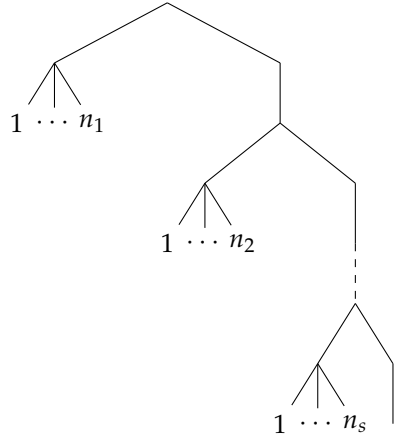
\begin{figure}[H]
\begin{tikzpicture}
[level distance=8mm,growth parent anchor=north,
every node/.style={anchor=north,rectangle,draw}
every child node/.style={anchor=south},
level 1/.style={sibling distance=30mm},
level 2/.style={sibling distance=5mm},
level 3/.style={sibling distance=20mm},
level 4/.style={sibling distance=5mm},
level 5/.style={sibling distance=10mm},
level 6/.style={sibling distance=5mm}
]
\coordinate
child { child {node (l2) {$1$}} child{node{$\cdots$}} child {node (r2) {$n_1$}} } 
child { child{
	      child { child{node{$1$}} child{node{$\cdots$}} child{node{$n_2$}} }
	      child {child[] { child[dashed]{} 
 { 
			    child  {child{node{$1$}} child{node{$\cdots$}} child{node{$n_s$}}}
			    child {child}}}}}}
;
\end{tikzpicture}
\caption{A summand of $F^n$}
\label{fig2}
\end{figure}
\end{minipage}
\end{center}
The three first terms are
$F^1=-Id_{\wid{C}}$, $F^2=E^{1,1}$ and $F^3=-(1\ot E^{1,1})E^{1,1}+ E^{2,1}$.

Formulated in these terms, the $n$-th component of the difference $S^2-Id$ is zero for $n=1$ and 
\begin{align}\label{invHcoal2}
O_n:=\sum_{s=1}^{n}\sum_{n_1+...+n_s=n}\mu_{\Omega_{(12)}}^{(s)}(F^{n_1}\ot ...\ot F^{n_s})F^s
\end{align}for $n\geq 2$.
The terms $(F^{n_1}\ot ...\ot F^{n_s})F^s$ have codomain $\wid{C}^{\ot n_1}\ot \wid{C}^{\ot n_2}\ot \cdots \ot \wid{C}^{\ot n_s}$. The $s$-iterated product $\mu_{\Omega_{(12)}}^{(s)}$ permutes these $s$ blocks as the permutation 
\begin{tikzpicture}
\matrix [matrix of math nodes,left delimiter={(},right delimiter={)}]
{
1 & 2 & \cdots & s-1 & s \\
s & s-1 & \cdots & 2 & 1 \\
};
\end{tikzpicture}
in $\mathbb{S}_s$, where $\mathbb{S}_s$ denotes the symmetric group on $s$ objects.
The two first terms we obtain are 
\[
  O_2=F^2-\tau F^2=E^{1,1}-\tau E^{1,1}
\]
and 
\begin{multline*}
O_3=(1+\tau_3)F^3+\tau^{2,1}(F^2\ot 1)F^2+\tau^{1,2}(1\ot F^2)F^2\\
= (\tau^{1,2}-1-\tau_3)(1\ot E^{1,1})E^{1,1}+\tau^{2,1}(E^{1,1}\ot 1)E^{1,1} + (1+\tau_3) E^{2,1}, 
\end{multline*}
 where the permutations are 
$\tau^{1,2}(a\ot b\ot c)=\pm b\ot c\ot a$, $\tau^{2,1}(a\ot b\ot c)=\pm c\ot a\ot b$ and $\tau_3(a\ot b\ot c)=\pm c\ot b \ot a$; the signs being given by the Koszul sign rule.\\
We conclude,
\begin{prop}\label{propobstruction}
Let $(C,d,\nabla_C,E^{k,1})$ be a homotopy G-coalgebra. 
\begin{enumerate}
 \item The cobar construction $\Omega C$ is an involutive DG-Hopf algebra if and only if all the obstructions $O_n:\wid{C}\to \wid{C}^{\ot n}$ defined in \eqref{invHcoal2} 
for $n\geq 2$ are zero.
\item Let $C$  be $2$-reduced. If all the obstructions $O_n:\wid{C}\to \wid{C}^{\ot n}$  are zero, then the double cobar construction  $\Omega^2C$ is a homotopy BV-algebra whose the BV-operator is the Connes-Moscovici operator defined in \eqref{BVop}.
\end{enumerate}
\end{prop}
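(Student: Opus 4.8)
The plan is to treat the two parts separately, with Part 1 the computational heart and Part 2 a direct application of Menichi's theorem (Proposition \ref{PropBV}).

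For Part 1, the first observation is that $\Omega C$ always carries an antipode $S$ defined by \eqref{antpodeconst}, so it is a DG-Hopf algebra; the only issue is involutivity. Since $S$ is an algebra antimorphism (Proposition \ref{antip properties}), the composite $S^2$ is an algebra endomorphism of the free algebra $\Omega C = T(\wid{C})$. Both $S^2$ and $Id$ being algebra endomorphisms of a free algebra, they coincide if and only if they agree on the generators $\wid{C}$. I would therefore compute $(S^2-Id)|_{\wid{C}}$ explicitly. Restricted to generators, $S$ equals the twisting cochain $F=\sum_n F^n$ with $F^n$ given by \eqref{Fn}; applying $S$ once more to a word $F^s(\sigma)\in \wid{C}^{\ot s}$ and using the antimorphism property to reverse the order of the letters produces the opposite product $\mu_{\Omega_{(12)}}$, while expanding each letter via $F=\sum F^{n_i}$ and collecting by total length $n=n_1+\cdots+n_s$ yields precisely the formula \eqref{invHcoal2} for the $n$-th component, the term $Id$ contributing only in degree $n=1$ where it cancels $F^1 F^1 = Id$. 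Hence $(S^2-Id)|_{\wid{C}}=\sum_{n\geq 2} O_n$, and $S^2=Id$ if and only if every $O_n$ vanishes.

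For Part 2, I would first record the degree bookkeeping: since $C$ is $2$-reduced, $\ov{C}$ is concentrated in degrees $\geq 3$, so $\wid{C}=\ds\ov{C}$ lives in degrees $\geq 2$ and the cobar construction $\Omega C = T(\wid{C})$ satisfies $(\Omega C)_0=R$ and $(\Omega C)_1=0$; that is, $\Omega C$ is $1$-reduced. By the hypothesis $O_n=0$ for all $n$ together with Part 1, the antipode of $\Omega C$ is involutive, so $\Omega C$ is a $1$-reduced involutive DG-Hopf algebra. Applying Proposition \ref{PropBV} with $\Hopf:=\Omega C$ then shows that its cobar construction $\Omega\Hopf=\Omega^2 C$ is a homotopy BV-algebra whose BV-operator is the Connes-Moscovici operator \eqref{BVop}, which is exactly the assertion.

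The main obstacle is the combinatorial and sign-theoretic identification carried out in Part 1: matching the iterated composite $S\circ S$ on generators with the explicit sum \eqref{invHcoal2}, in particular checking that the reversal permutation governing $\mu_{\Omega_{(12)}}^{(s)}$ and the Koszul signs produced by the antimorphism property agree term by term. A secondary point worth emphasizing is the role of involutivity in Part 2: it is precisely $S^2=Id$ that makes the Connes-Moscovici operator a square-zero chain map, so that Proposition \ref{PropBV} applies; without the vanishing of the obstructions the operator \eqref{BVop} need not even square to zero.
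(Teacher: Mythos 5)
Your proposal is correct and follows essentially the same route as the paper: the paper likewise encodes $S$ on generators as the twisting cochain $F=\sum F^n$ from \eqref{Fn}, identifies the $n$-th component of $S^2-Id$ with \eqref{invHcoal2} (the block-reversal $\mu_{\Omega_{(12)}}^{(s)}$ arising exactly from the antimorphism property), and deduces Part 2 by applying Proposition \ref{PropBV} to the $1$-reduced involutive DG-Hopf algebra $\Omega C$. Your explicit degree bookkeeping showing that $2$-reducedness of $C$ makes $\Omega C$ $1$-reduced, and your remark that involutivity is what makes the Connes--Moscovici operator square to zero, are both consistent with (and slightly more explicit than) the paper's implicit treatment.
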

In fact we can make more precise the second point of the previous proposition.
Let $M$ be a DG-module and let $M_{\leq n}$ be the sub-module of elements $m\in M$ of degree $|m|\leq n$.
 The homotopy G-coalgebra structure on a connected coalgebra $C$ preserves the filtration $C_{\leq n}$.
Indeed the degree of the operations $E^{k,1}:\wid{C}\to \wid{C}^{\ot k}\ot \wid{C}$ is $0$. We have,
\begin{prop} 
Let $(C,d,\nabla_C,E^{k,1})$ be an $i$-reduced homotopy G-coalgebra, $i\geq 2$.
If there exists an integer $n$ such that $O_k=0$ for $ki\leq n-1$, then $\Omega^2 (C_{\leq n})$ is a homotopy BV-algebra. 
\end{prop}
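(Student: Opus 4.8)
The plan is to deduce the statement from the second point of Proposition \ref{propobstruction}, applied not to $C$ itself but to its degreewise truncation $C_{\leq n}$, where $n$ is the integer furnished by the hypothesis. First I would observe that $C_{\leq n}$ inherits a homotopy G-coalgebra structure: since every operation $E^{k,1}:\wid{C}\to \wid{C}^{\ot k}\ot \wid{C}$ has degree $0$, it preserves the degree filtration $C_{\leq m}$ (this is exactly the remark preceding the statement), so the structure maps of $C$ restrict to $C_{\leq n}$. Moreover, as $i\geq 2$, the coalgebra $C_{\leq n}$ is $i$-reduced, hence in particular $2$-reduced, so the hypothesis on the ground coalgebra in Proposition \ref{propobstruction}(2) is met once we verify that all of its obstructions vanish.

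The heart of the argument is a degree count. Because $C$ is $i$-reduced, $\ov{C}$ is concentrated in degrees $\geq i+1$, so after desuspension $\wid{C}=\ds\ov{C}$ is concentrated in degrees $\geq i$ and $\wid{C}^{\ot k}$ in degrees $\geq ki$. On the other hand each obstruction $O_k:\wid{C}\to\wid{C}^{\ot k}$ defined in \eqref{invHcoal2} is a composite of the operations $E^{k,1}$ and of the opposite products $\mu_{\Omega_{(12)}}$, all of degree $0$; hence $O_k$ has degree $0$. For the truncation, $\wid{C_{\leq n}}$ is concentrated in degrees between $i$ and $n-1$. I would then split the obstructions $O_k$ (for $k\geq 2$) of $C_{\leq n}$ into the two complementary integer ranges. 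When $ki\leq n-1$ the hypothesis gives $O_k=0$ already on $\wid{C}$, hence on $\wid{C_{\leq n}}$. When $ki\geq n$, any $\sigma\in \wid{C_{\leq n}}$ has $|\sigma|\leq n-1<ki$; but $O_k(\sigma)$ has degree $|\sigma|$ while $\wid{C_{\leq n}}^{\ot k}$ is concentrated in degrees $\geq ki$, forcing $O_k(\sigma)=0$. Since $ki\leq n-1$ or $ki\geq n$ exhausts all integers, every obstruction of $C_{\leq n}$ vanishes.

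Before invoking Proposition \ref{propobstruction}(2) I would confirm that the obstructions of $C_{\leq n}$ are literally the restrictions of those of $C$, i.e. that the composites defining $F^m$ and $O_k$ do not leave $\wid{C_{\leq n}}$. This is again a degree matter: if $\sigma$ has degree $d\leq n-1$, then in $E^{n_1,1}(\sigma)=\sum \sigma_1\ot\cdots\ot\sigma_{n_1+1}$ each factor has degree $\leq d-n_1 i\leq d\leq n-1$ and so lies in $\wid{C_{\leq n}}$; iterating, every intermediate term occurring in $F^m(\sigma)$, and then every concatenation produced by $\mu_{\Omega_{(12)}}^{(s)}$, stays within tensor powers of $\wid{C_{\leq n}}$. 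Hence $O_k^{C_{\leq n}}=O_k^{C}\big|_{\wid{C_{\leq n}}}$, and by the previous paragraph these all vanish. Applying Proposition \ref{propobstruction}(2) to the $2$-reduced homotopy G-coalgebra $C_{\leq n}$ then yields that $\Omega^2(C_{\leq n})$ is a homotopy BV-algebra with the Connes--Moscovici operator of \eqref{BVop}.

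I expect the only genuine obstacle to be this bookkeeping of where the operations land: one must check both that truncation is compatible with the homotopy G-coalgebra structure and with the formation of the obstructions, and that the two regimes $ki\leq n-1$ and $ki\geq n$ together account for all relevant $k\geq 2$. Everything else is a direct appeal to Proposition \ref{propobstruction}.
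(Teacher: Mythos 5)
Your argument is correct and is essentially the paper's own proof: both rest on the observation that $O_k$ has degree $0$ while $\ds(\ov{C}_{\leq n})=\wid{C}_{\leq n-1}$ forces the target $\wid{C}^{\ot k}$ (concentrated in degrees $\geq ki$) to kill $O_k$ whenever $ki\geq n$, leaving only the obstructions with $ki\leq n-1$, which vanish by hypothesis. The extra bookkeeping you supply (that truncation is compatible with the $E^{k,1}$ and with the formation of the $F^m$ and $O_k$) is implicit in the paper's remark preceding the statement, so there is nothing to add.
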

 \begin{proof}
The degree of $O_k:\wid{C}\to \wid{C}^{\ot k}$ is $0$. Then on $\ds (\ov{C}_{\leq n})=\wid{C}_{\leq n-1}$ the only eventually non-zero operations $O_k:\wid{C}_{\leq n}\to (\wid{C}_{\leq n})^{\ot k}$ are those with $ki\leq n-1$. 
 \end{proof}
\begin{rmq}
 The two previous propositions work for a Hirsch coalgebra $C$ instead of a homotopy G-coalgebra : the operations from \eqref{invHcoal2} have the same definition in terms of $F_n$; only the $F_n$'s defining the antipode differ. However, the involved techniques above are quite similar for Hirsch coalgebras.
\end{rmq}
\begin{rmq}
 For a general Hirsch coalgebra $(C,E^{i,j})$ the condition of cocommutativity of the coproduct on $\Omega C$ is $E^{i,j}=\tau E^{j,i}$ for all $(i,j)$. Accordingly, with Proposition \ref{antip properties} we see (already in component three) that condition for the antipode to be involutive is weaker than condition for coproduct to be cocommutative. When the Hirsch coalgebra is a homotopy G-coalgebra, the cocommutativity of the coproduct means that the homotopy G-coalgebra is quasi trivial: $E^{1,1}=\tau E^{1,1}$ and $E^{k,1}=0$ for $k\geq 2$. 
\end{rmq}
\section{Applications}
\subsection{On a homotopy G-coalgebra structure of $C_*(X)$}\label{section hGc}
The chain complex $C_*(X)$ of a topological space or a simplicial set $X$ has a rich algebraic structure, it is an $E_{\infty}$-coalgebra. This was treated in many papers including 
J. McClure and J. Smith \cite{McClureSmithmulti}, C. Berger and B. Fresse \cite{BergerFresse}.
For example, the surjection operad $\chi$ introduced in \cite{McClureSmithmulti} acts on the normalized chain complex $C_*(X)$ of a simplicial set $X$ making it into a coalgebra over $\chi$, see \cite{McClureSmithmulti, BergerFresse}. 
It has a filtration of suboperads
\[
 F_1\chi \subset  F_{2}\chi \subset \cdots \subset F_{n-1}\chi \subset F_n\chi\subset \cdots \subset \chi.
\]
This structure leads to a coproduct $\nabla_0: \cob \to \cob\ot \cob$ first defined by Baues in \cite{Bauesgeom}. 
The third stage filtration $F_3\chi$ gives a homotopy cocommutativity $\nabla_1:\cob\to \cob\ot \cob$ to Baues' coproduct, see \cite{Bauesgeom}. In turn the operation $\nabla_1:\cob\to \cob\ot \cob$ is cocommutative up to a homotopy $\nabla_2:\cob\to \cob\ot \cob$ and so on. The resulting structure is known as a structure of DG-bialgebra with Steenrod coproduct $\nabla_i$. This was achieved by Kadeishvili in \cite{KadeishviliSteenrod} where the corresponding operations in $\chi$ are given. 
\\

Baues' coproduct \cite[p.334]{Bauesdouble}, \cite[(2.9) equation (3)]{Bauesgeom} on the cobar construction $\cob$ corresponds to a homotopy G-coalgebra on $C_*(X)$. 
By a direct comparison, we see that this homotopy G-coalgebra structure coincides with the one given in \cite{McClureSmithmulti, BergerFresse}. To be more precise, let 
\[
E^{k,1}_0:\wid{C}_*(X)\to \wid{C}_*(X)^{\ot k}\ot \wid{C}_*(X) 
\]
be the operations defined by Baues' coproduct 
\[
\nabla_0:\cob \to \cob \ot \cob .
\]
 Let 
\[
E^{1,k}:\wid{C}_*(X)\to \wid{C}_*(X)\ot \wid{C}_*(X)^{\ot k} 
\]
the operations defined by
\begin{multline*}
  E^{1,k}(\ds\sigma)=\sum_{0\leq j_1<j_2<...<j_{2k}\leq n} \\
 \pm \ds\sigma(0,...,j_1,j_2,...,j_3,j_4,\cdots,j_{2k-1},j_{2k},...,n)\ot \ds\sigma(j_1,...,j_2)\ot\ds \sigma(j_3,...,j_4)\ot \cdots \ot \ds\sigma(j_{2k-1},...j_{2k}),
 \end{multline*}
for $\ds\sigma \in \wid{C}_n(X)$. 
These are the operations denoted by $AW(1,2,1,3,...,k-1,1,k)$ in \cite[section 2.2]{BergerFresse} where $AW(u):\wid{C}_*(X)\to \wid{C}_*(X)^{\ot n}$ is defined for a surjection $u\in \chi(n)_d$.
The operad $F_2\chi$ is generated by the surjections $(1,2)$ and $(1,2,1,3,...,1,k,1,k+1,1)$ for $k\geq 1$. 
Then the operations
\[
 E^{1,k}:=AW(1,2,1,3,...,k,1,k+1,1),
\]
define a homotopy G-coalgebra structure on $C_*(X)$. 
We have
\begin{prop}\label{hGc defprop} Let $X$ be a $1$-reduced simplicial set. Then 
\[
E_0^{k,1}=\pm\tau^{1,k} E^{1,k}, 
\]
where $\tau^{1,k}$ orders the factors as the following  permutation  
\[
\begin{tikzpicture}
\matrix [matrix of math nodes,left delimiter={(},right delimiter={)}]
{
1 & 2 & \cdots & k & k+1 \\
2 & 3 & \cdots & k+1 & 1 \\
};
\end{tikzpicture}.
\]
\end{prop}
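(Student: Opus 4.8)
The plan is to prove the identity by a direct comparison of the explicit formulas defining the two families of operations, matching them summand by summand. First I would make the left-hand side explicit. Because $\Omega C_*(X)$ is a DG-bialgebra, Baues' coproduct $\nabla_0$ is determined by its values on the generators $\ds\sigma$, and $E_0^{k,1}(\ds\sigma)$ is, up to the desuspension signs of the convention \eqref{conv}, the component of the reduced coproduct $\overline{\nabla}_0(\ds\sigma)$ lying in the bidegree $\wid{C}_*(X)^{\ot k}\ot\wid{C}_*(X)$ of $\Omega C_*(X)\ot\Omega C_*(X)$; all bidegrees $(i,j)$ with $j\geq 2$ vanish because the structure is a homotopy G-coalgebra. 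Expanding Baues' combinatorial description of $\nabla_0(\ds\sigma)$ and extracting this bidegree produces a sum indexed by the interior tuples $0\leq j_1<\cdots<j_{2k}\leq n$ whose terms are the $k$ inner faces $\ds\sigma(j_1,\ldots,j_2),\ldots,\ds\sigma(j_{2k-1},\ldots,j_{2k})$ followed, in the last tensor slot, by the contracted outer face $\ds\sigma(0,\ldots,j_1,j_2,\ldots,j_3,\ldots,j_{2k},\ldots,n)$.

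Next I would record the right-hand side from the displayed table-reduction formula: $E^{1,k}(\ds\sigma)=AW(1,2,1,3,\ldots,k,1,k+1,1)(\ds\sigma)$ is the same sum over $0\leq j_1<\cdots<j_{2k}\leq n$, but with the contracted outer face now in the \emph{first} slot and the $k$ inner faces following it. Identifying the two index sets by the identity bijection on the tuples $(j_1,\ldots,j_{2k})$, the ordered list of tensor factors of $E_0^{k,1}(\ds\sigma)$ is exactly that of $E^{1,k}(\ds\sigma)$ with its first factor moved to the end, which is precisely the reordering $\tau^{1,k}$; hence the two operations agree up to sign. The hypothesis that $X$ be $1$-reduced is what guarantees that every outer and inner face lies in $\ov{C}_*(X)$, so that all summands are well-defined generators of the cobar construction and both sides genuinely define operations $\wid{C}_*(X)\to\wid{C}_*(X)^{\ot(k+1)}$.

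The remaining and genuinely delicate point, which I expect to be the main obstacle, is the verification of signs: one must show that for each summand the sign appearing in $E_0^{k,1}$ equals the sign appearing in the corresponding summand of $E^{1,k}$, multiplied by the Koszul sign that $\tau^{1,k}$ produces on that summand, up to a global factor depending only on $k$ and absorbed into the ``$\pm$'' of the statement. I would track the three sign sources independently: the desuspension signs dictated by the convention \eqref{conv}, the intrinsic signs of Baues' coproduct, and the table-reduction signs built into $AW$ of the surjection $(1,2,1,3,\ldots,k,1,k+1,1)$. Each of these, together with the Koszul sign of $\tau^{1,k}$, depends on the partial degrees $|\sigma(j_{2i-1},\ldots,j_{2i})|$ of the inner faces, hence on the chosen tuple; the heart of the argument is that, once the $\tau^{1,k}$ sign is factored out, these tuple-dependent contributions cancel and leave a residual discrepancy that is constant in $\sigma$ and in $(j_1,\ldots,j_{2k})$. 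Since the proposition asserts equality only up to an overall sign, it suffices to confirm this cancellation, which reduces to comparing the two sign functions as functions of the $|\sigma(j_{2i-1},\ldots,j_{2i})|$ and checking that their quotient is independent of $\sigma$ and of the tuple.
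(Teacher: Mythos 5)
Your proposal follows essentially the same route as the paper: expand Baues' explicit coproduct, use $1$-reducedness to discard the degenerate length-one faces and thereby reindex the sum by the tuples $(j_1,\dots,j_{2k})$, and observe that each resulting summand is the corresponding summand of $E^{1,k}(\ds\sigma)$ with the outer face cyclically moved from the first to the last tensor slot, which is exactly $\tau^{1,k}$. The paper likewise leaves all signs under a ``$\pm$'', so your deferred sign verification is no less complete than the published argument.
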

\begin{proof}
First we recall Baues' coproduct $\nabla_0$. We adopt the same conventions as in \cite{Bauesdouble}. 
 For $\sigma_i\in X$, the tensor $[\sigma_1|\sigma_2|...|\sigma_n]$ is the tensor $s^{-1}\sigma_{i_1}\otimes s^{-1}\sigma_{i_2}\otimes ...\otimes s^{-1}\sigma_{i_r}$ where the indices $i_j$ are such that $\sigma_{i_j}\in X_{n_{i_j}}$ with $n_{i_j}\geq 2$. 
For a subset $b=\{b_0<b_1<...<b_r\}\subset\{0,1,...,n\}$ we denote by $i_b$ the unique order-preserving injective function 
\[
i_b:\{0,1,...,r\}\to\{0,1,...,n\}
 \]
such that $\Ima(i_b)=b$. 
Let $\sigma \in X_n$ and $0\leq b_0<b_1\leq n$. 
We denote by $\sigma(b_0,...,b_1)$ the element $i^*_b\sigma\in X_{b_1-b_0}$ where $b=\{b_0,b_0+1,...,b_1-1,b_1\}$.  
Let $b\subset \{1,...,n-1\}$, we denote by $\sigma(0,b,n)$ the element $i^*_{b'}\sigma$ where $b'=\{0\}\cup b \cup \{n\}$.\\
Baues' coproduct $\nabla_0$  \cite[p.334]{Bauesdouble} is defined on $\sigma \in X_n$, $n\geq 2$ by:
\begin{equation}\label{coprod}
 \begin{split}
  \nabla_0 : \cob & \to \cob \otimes \cob \\
      s^{-1}\sigma=[\sigma] &\mapsto \sum_{\substack{b=\{b_1<b_2<...<b_r\} \\b\subset \{1,...,n-1\}}} (-1)^{\zeta} [\sigma(0,...,b_1)|\sigma(b_1,...,b_2)|...|\sigma(b_r,...,n)]\otimes[\sigma(0,b,n)]
 \end{split}
\end{equation} where
\begin{align*}
\zeta&=r|\sigma(0,..,b_1)|+\sum_{i=2}^{r}(r+1-i)|\sigma(b_{i-1},...,b_i)|-r(r+1)/2,
 \end{align*}
it is extended as an algebra morphism on the cobar construction.

We show that the coproduct $\nabla_0$ defines the same homotopy G-coalgebra structure on $C_*(X)$ when $X$ is $1$-reduced.
Let $\sigma$ be an $n$-simplex, then Baues' coproduct  is a sum over the subset  $b=\{b_1<b_2<...<b_r\}\subset \{1,...,n-1\}$. 
For such a $b$, we set $b_0:=0$, $b_{r+1}:=n$ and we defined $\beta_l\in b\cup \emptyset$ as follows.
For $l=1$,
\[
\beta_{1}:=\min_{1\leq i\leq r+1}\{b_i~|~b_i-b_{i-1}\geq 2\}, 
\]
and let  $\eta_1$ be the index such that $b_{\eta_1}=\beta_1$; for $l\geq 2$, 
\[
\beta_{l}:=\min_{\eta_{l-1}+1\leq i\leq r+1}\{b_i~|~b_i-b_{i-1}\geq 2\} 
\]
 where $\eta_l$ is the index such that $b_{\eta_{l}}=\beta_{l}$. 
Moreover, we set $\alpha_l:=b_{\eta_l-1}$. 
Let $k$ be the integer such that $1\leq l \leq k$.
Explicitly, $k$ is given by:
\begin{equation*}
 k=\# \{1\leq i\leq r+1~|~ b_i-b_{i-1}\geq 2\}.
\end{equation*}
Thus the coproduct $\nabla_0$ is 
\begin{multline*}
      \nabla_0([\sigma])=\sum_{k=0}^n \sum_{\substack{0\leq \alpha_1\leq \beta_1\leq \alpha_2 \leq \cdots\leq \alpha_k\leq \beta_k\leq n \\ \beta_i-\alpha_i\geq 2,~ 1\leq i\leq k }} \pm \ds\sigma(\alpha_1,...,\beta_1)\ot \ds \sigma(\alpha_2,...,\beta_2)\ot \cdots\\
\cdots \ot \ds \sigma(\alpha_k,...,\beta_k)\otimes
 \ds \sigma(0,...,\alpha_1,\beta_1,...,\alpha_2,\cdots,\alpha_k,\beta_k,...,n).
\end{multline*}
Thus we have,
\begin{multline*}
 E^{k,1}(\ds\sigma)=\sum \pm\ds\sigma(\alpha_1,...,\beta_1)\ot \ds\sigma(\alpha_2,...,\beta_2)\ot\cdots\ot \ds\sigma(\alpha_k,...,\beta_k)\ot \\  \ds \sigma(0,...,\alpha_1,\beta_1,...,\alpha_2,\cdots,\alpha_k,\beta_k,...,n).
\end{multline*}
The result is obtained by setting $j_{2l-1}=\alpha_{l}$ and $j_{2l}=\beta_{l}$.
Since $X$ is $1$-reduced, the elements $\sigma(j_l,...,j_{l+1})$ such that $j_{l+1}-j_l=1$ are elements in $X_1=*=s_0(*)$ and then are degenerate.
\end{proof}

In the two next subsections \ref{Section4} and \ref{Section5} we adopt the above notations for the homotopy G-coalgebra structure $E^{1,k}$ on $C_*(X)$.%
\subsection{Obstruction to the involutivity of the antipode of $\Omega C_*(\Sigma X)$}\label{Section4}
Let  $\Sigma X$ be a simplicial suspension of a simplicial set $X$. We show that the family of obstructions $O_n:\wid{C}_*(\Sigma X)\to \wid{C}_*(\Sigma X)^{\ot n}$  defined in \eqref{invHcoal2}
can be reduced to $O_2:\wid{C}_*(\Sigma X)\to \wid{C}_*(\Sigma X)^{\ot 2}$; the latter being governed by 
the (lack of) cocommutativity of the  operation $E^{1,1}$. 
\begin{de}{\cite[Definition 27.6 p.124]{MaySimplicial}}
Let $X$  be a simplicial set such that $X_0=*$ and with face and degeneracy operators $s_j:X_n\to X_{n+1}$ and $d_j:X_n\to X_{n-1}$. 
 The \textbf{simplicial suspension} $\Sigma X$ is defined as follow.
 The component $(\Sigma X)_0$ is just an element $a_0$ and $(\Sigma X)_n=\{(i, x)\in \mathbb{N}^{\geq 1} \times X_{n-i} \}/ \Bigl((i,s_0^n(*)) = s_0^{n+i}(a_0)\Bigr)$. We set $a_n=s_0^n(a_0)$. The face and degeneracy operators are generated by:
\begin{itemize}
 \item $d_0(1, x) = a_n$ for all $x\in X_n$;
\item  $d_1(1, x) = a_0$ for all $x \in X_0$;
\item $d_{i+1}(1, x) = (1, d_i(x))$ for all $x \in X_n, n>0$;
\item $s_0(i, x) = (i + 1, x)$;
\item $s_{i+1}(1, x) = (1, s_i(x))$,
\end{itemize}
with the other face and degeneracy operators determined by the requirement that $\Sigma X$ is a simplicial set.
\end{de}
\begin{prop}{\cite{HPS}}
The differential of $(\Omega C_*(\Sigma X),d_0+d_1)$ is reduced to its linear part $d_0$. 
\end{prop}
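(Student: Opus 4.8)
The plan is to prove that the quadratic part $d_1=(\ds\ot\ds)\ov{\nabla}_{C}$ of the cobar differential vanishes, i.e.\ that the reduced Alexander--Whitney coproduct $\ov{\nabla}$ is identically zero on the normalized chains $\wid{C}_*(\Sigma X)$. Once this is established, $d_{\Omega}=d_0+d_1=d_0$ follows at once from the defining formula $d_{\Omega}(\ds c)=-\ds d_{\ov C}(c)+(\ds\ot\ds)\ov{\nabla}_C(c)$, whose second summand is exactly $d_1$.

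First I would pin down the nondegenerate generators. Since $s_0(i,x)=(i+1,x)$, every simplex $(i,x)$ with $i\geq 2$ is degenerate, and each basepoint $a_m=s_0^m(a_0)$ is degenerate for $m\geq 1$. Hence in degree $n\geq 1$ the module $\wid{C}_n(\Sigma X)$ is freely generated by the classes of the simplices $(1,x)$ with $x$ a nondegenerate $(n-1)$-simplex of $X$; in particular $\wid{C}_1(\Sigma X)=0$, so $\Sigma X$ is $1$-reduced and the cobar construction is defined.

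Next I would invoke the Alexander--Whitney coproduct $\nabla(\sigma)=\sum_{p=0}^{n}\sigma(0,\dots,p)\ot\sigma(p,\dots,n)$, whose back face is the iterated first-vertex face $\sigma(p,\dots,n)=d_0^{\,p}\sigma$ and whose reduced part $\ov{\nabla}$ retains only the mixed terms with $0<p<n$. Evaluating on a generator $\sigma=(1,x)$ with $x\in X_{n-1}$, the suspension face rule $d_0(1,x)=a_{n-1}$ gives $d_0\sigma=a_{n-1}$, and then the simplicial identity $d_0 s_0=\mathrm{id}$ together with $a_m=s_0^m(a_0)$ yields $d_0^{\,p}\sigma=a_{n-p}$ for all $1\leq p\leq n$.

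Finally, for each mixed index $0<p<n$ the back face $d_0^{\,p}\sigma=a_{n-p}=s_0^{\,n-p}(a_0)$ is a degenerate basepoint (here $n-p\geq 1$), so the tensor $\sigma(0,\dots,p)\ot\sigma(p,\dots,n)$ is zero in normalized chains irrespective of the front factor. Thus $\ov{\nabla}(\sigma)=0$ on every generator, hence $d_1=0$ and $d_{\Omega}=d_0$. The only genuinely delicate point is the normalization bookkeeping---checking that all simplices $(i,x)$ with $i\geq 2$ and all $a_m$ with $m\geq 1$ are degenerate, so that the collapsing effect of $d_0$ really annihilates the mixed terms at the level of normalized chains; once this is secured, the computation of $d_0^{\,p}\sigma$ is a direct consequence of the defining relations of $\Sigma X$.
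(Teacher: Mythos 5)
Your proof is correct and follows essentially the same route as the paper: both arguments reduce to the observation that for a nondegenerate generator $(1,x)$ the iterated front-vertex faces $d_0^{\,p}(1,x)=a_{n-p}=s_0^{\,n-p}(a_0)$ are degenerate, so the Alexander--Whitney coproduct is primitive on normalized chains and the quadratic part $d_1$ of the cobar differential vanishes. Your version merely spells out the computation of $d_0^{\,p}\sigma$ and the identification of the nondegenerate simplices in slightly more detail than the paper does.
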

\begin{proof}
 The only non degenerate elements in $\Sigma X$ are $(1,x)$ and we have that $d_0(1, x) = a_n$ is  degenerate. Therefore the Alexander-Whitney coproduct on $C_*(\Sigma X)$ is primitive (we recall that $C_*$ are the normalized chains). Then, the reduced coalgebra $\ov{C}_*(\Sigma X)$ is a trivial coalgebra; 
so the quadratic part $d_1$ of the  differential $d_{\Omega}=d_0+d_1$ on the cobar construction is trivial.
\end{proof}

A natural coproduct to define is the shuffle coproduct (primitive on cogenerators) which gives a cocommutative DG-Hopf structure to $\Omega C_*( \Sigma X)$; applying Proposition \ref{PropBV}  we obtain on $\Omega^2 C_*(\Sigma X)$ a homotopy BV-algebra structure. However, this coproduct does not correspond to Baues' coproduct $\nabla_0$. Indeed, the  homotopy G-algebra structure on $C_*(\Sigma X)$ is not completely trivial. 
Because of the cocommutativity of the Alexander-Whitney coproduct on $C_*(\Sigma X)$, the operation $E^{1,1}$ must be a chain map but not necessary the zero map. 
We obtain,
\begin{prop}\label{susp1} The homotopy G-coalgebra structure $E^{k,1}$ on $C_*(\Sigma X)$ given by Baues' coproduct is  
 \begin{equation*}
 E^{1,1}(\ds\sigma)= \sum_{1<l< n}\pm \ds\sigma(0,...,l)\ot \ds\sigma(0,l,l+1,...,n);
\end{equation*}
\begin{equation*}
 E^{k,1}=0 ~~\text{for}~k\geq 2.
\end{equation*}
\end{prop}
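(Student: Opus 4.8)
The plan is to feed the explicit description of $E^{k,1}$ obtained in the proof of Proposition~\ref{hGc defprop} into the rigid degeneracy structure of a simplicial suspension, which kills all but a handful of summands. Evaluating on a non-degenerate $n$-simplex $\sigma=(1,x)$ of $\Sigma X$, that formula reads
\begin{multline*}
E^{k,1}(\ds\sigma)=\sum \pm\, \ds\sigma(\alpha_1,\dots,\beta_1)\ot\cdots\ot \ds\sigma(\alpha_k,\dots,\beta_k)\\
\ot\, \ds\sigma(0,\dots,\alpha_1,\beta_1,\dots,\alpha_k,\beta_k,\dots,n),
\end{multline*}
the sum ranging over $0\leq\alpha_1\leq\beta_1\leq\cdots\leq\alpha_k\leq\beta_k\leq n$ with $\beta_i-\alpha_i\geq 2$. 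Since we use normalized chains, a summand survives only if each of its tensor factors is a non-degenerate simplex, so everything reduces to deciding which restrictions $\sigma(\alpha_i,\dots,\beta_i)$ are degenerate.

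The key step is a degeneracy lemma: for $\sigma=(1,x)$ and any $1\leq b_0<\cdots<b_r\leq n$, the restriction $\sigma(b_0,\dots,b_r)$ is degenerate. I would prove this by observing that, as $b_0\geq 1$, vertex $0$ is omitted, so $\sigma(b_0,\dots,b_r)=(d_0\sigma)(b_0-1,\dots,b_r-1)$; but the defining relation $d_0(1,x)=a_n=s_0^n(a_0)$ makes $d_0\sigma$ totally degenerate, and every restriction of $a_n$ is again of the form $s_0^r(a_0)$, hence degenerate for $r\geq 1$. In short, a restriction of $\sigma$ can be non-degenerate only when it begins at the vertex $0$.

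The conclusion is then immediate. If $k\geq 2$, the second factor $\sigma(\alpha_2,\dots,\beta_2)$ has $\alpha_2\geq\beta_1\geq\alpha_1+2\geq 2$, so it begins strictly after vertex $0$ and is degenerate; every summand vanishes and $E^{k,1}=0$. If $k=1$, only the terms with $\alpha_1=0$ survive, leaving $\sum_{2\leq l\leq n}\pm\,\ds\sigma(0,\dots,l)\ot\ds\sigma(0,l,l+1,\dots,n)$ with $l=\beta_1$. It remains to drop the endpoint $l=n$: there the second factor is $\sigma(0,n)$, a $1$-simplex, and since the hypothesis $X_0=*$ forces $\Sigma X$ to be $1$-reduced, this $1$-simplex is degenerate. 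What survives is exactly the asserted sum over $1<l<n$. The one genuinely delicate point is the identity $\sigma(b_0,\dots,b_r)=(d_0\sigma)(b_0-1,\dots,b_r-1)$ underlying the lemma, which comes from the simplicial identities for iterated faces; the rest is bookkeeping, and no signs need to be tracked since the statement records them only as $\pm$.
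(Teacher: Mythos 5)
Your argument is correct and follows essentially the same route as the paper: both proofs observe that $d_0\sigma=\sigma(1,\dots,n)$ is (totally) degenerate for a non-degenerate $\sigma\in(\Sigma X)_n$, so any restriction omitting the vertex $0$ dies in normalized chains, which kills all summands of $E^{1,1}$ with $\alpha_1\geq 1$ and, for $k\geq 2$, the factor $\sigma(\alpha_2,\dots,\beta_2)$ since $\alpha_2\geq\beta_1\geq 2$. Your write-up merely makes explicit two points the paper leaves implicit, namely the identity $\sigma(b_0,\dots,b_r)=(d_0\sigma)(b_0-1,\dots,b_r-1)$ and the exclusion of the endpoint $l=n$ via $1$-reducedness of $\Sigma X$.
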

\begin{proof}
 For a non-degenerate $\sigma \in (\Sigma X)_n$, we have $d_0\sigma=\sigma(1,...,n)$ is degenerate. Consequently, the operation
\begin{align*}
 E^{1,1}(\ds\sigma)= \sum_{k<l}\pm \ds\sigma(k,...,l)\ot \ds\sigma(0,...,k,l,l+1,...,n)
\end{align*}
reduces to 
 \begin{equation*}
 E^{1,1}(\ds\sigma)= \sum_{1<l< n} \pm\ds\sigma(0,...,l)\ot \ds\sigma(0,l,l+1,...,n).
\end{equation*}
The higher operations $E^{k,1}$ for $k\geq 2$ given by
\begin{multline*}
 E^{1,k}(\ds\sigma)=\sum_{0\leq j_1<j_2<...<j_{2k}\leq n} \\
\pm\ds\sigma(0,...,j_1,j_2,...,j_3,j_4,\cdots,j_{2k-1},j_{2k},...,n)\ot\ds \sigma(j_1,...,j_2)\ot\ds\sigma(j_3,...,j_4) \ot \cdots \ot \ds\sigma(j_{2k-1},...j_{2k})
\end{multline*}
are zero since the terms $\sigma(j_3,...,j_{4})$ are degenerate. 
\end{proof}
The triviality of higher operations $E^{k,1}$ for $k\geq 2$ does not imply the vanishing of higher obstructions $O_n$ since the $E^{1,1}$ operation appears in all the $O_n$'s. However, we can reduce the family of obstructions $O_n$ to only $O_2$ in this case.
 \begin{prop}\label{obstructionSuspension1}
  Let $(C,d,\nabla_C,E^{k,1})$ be a homotopy G-coalgebra with $E^{k,1}=0$ for $k\geq 2$. If $O_2=E^{1,1}-\tau E^{1,1}$ is zero, then so is $O_n$ for $n\geq 2$.
 \end{prop}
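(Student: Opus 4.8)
The plan is to avoid computing the $O_n$ directly and instead show that the single hypothesis $O_2=0$ already forces the whole coproduct $\nabla$ on $\Omega C$ to be cocommutative; the conclusion then follows formally from the two results already in hand, namely the last item of Proposition~\ref{antip properties} (cocommutativity implies $S^2=id$) and the first point of Proposition~\ref{propobstruction} (involutivity of $S$ is equivalent to the vanishing of all the $O_n$).

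Concretely, I would first record that under the hypothesis $E^{k,1}=0$ for $k\geq 2$ the only surviving operation is $E:=E^{1,1}$, so that on the generators $\wid{C}$ of the free tensor algebra $\Omega C=T(\wid{C})$ the reduced coproduct is simply $\ov{\nabla}|_{\wid{C}}=E$, i.e. $\nabla(\sigma)=\sigma\ot 1+1\ot\sigma+E(\sigma)$ for $\sigma\in\wid{C}$. The assumption $O_2=E-\tau E=0$ is then exactly the statement that $E$ is cocommutative, $\tau E=E$. Next I would verify that the graded flip $\tau$ is an algebra endomorphism of $\Omega C\ot\Omega C$ (a routine Koszul-sign check), so that both $\nabla$ and $\tau\nabla$ are algebra maps $\Omega C\to\Omega C\ot\Omega C$. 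Since $\Omega C$ is free on $\wid{C}$, it is enough to compare these two algebra maps on generators: for $\sigma\in\wid{C}$ one has $\tau(\sigma\ot 1+1\ot\sigma)=\sigma\ot 1+1\ot\sigma$ (because $|1|=0$) and $\tau E(\sigma)=E(\sigma)$ by hypothesis, hence $\tau\nabla(\sigma)=\nabla(\sigma)$. Agreement on the free generators propagates to all of $\Omega C$, so the DG-bialgebra $\Omega C$ is cocommutative.

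Having established cocommutativity, the last item of Proposition~\ref{antip properties} gives $S^2=id$, so $\Omega C$ is an involutive DG-Hopf algebra; the first point of Proposition~\ref{propobstruction} then yields that all the obstructions $O_n$ for $n\geq 2$ vanish, which is precisely the claim. The only delicate points in this route are the two structural verifications flagged above, namely that $\tau$ is a \emph{graded} algebra map of $\Omega C\ot\Omega C$ and that equality of two algebra morphisms on the free generators $\wid{C}$ extends to the whole of $\Omega C$; both are standard, so I expect no genuine obstacle here.

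For completeness I would also indicate a direct combinatorial alternative, which is more painful but more self-contained. Coassociativity of $\nabla$ (built into the Hirsch/homotopy G-coalgebra structure) forces $E$ to be coassociative, $(E\ot 1)E=(1\ot E)E$; with $E^{k,1}=0$ for $k\geq 2$ the formula \eqref{Fn} collapses to $F^n=(-1)^n E^{(n-1)}$, where $E^{(n-1)}:\wid{C}\to\wid{C}^{\ot n}$ is the $(n-1)$-fold iterated coproduct. Under the hypothesis $\tau E=E$ this iterated coproduct is invariant (with Koszul signs) under the full symmetric group $\mathbb{S}_n$, so in \eqref{invHcoal2} each signed block-reversal produced by $\mu_{\Omega_{(12)}}^{(s)}$ acts trivially on $E^{(n-1)}$, and counting the $\binom{n-1}{s-1}$ compositions of $n$ into $s$ positive parts one is left with $O_n=(-1)^n E^{(n-1)}\sum_{s=1}^{n}(-1)^s\binom{n-1}{s-1}=0$ for $n\geq 2$. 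The main obstacle on this second route is precisely the Koszul-sign bookkeeping needed to match the sign of $\mu_{\Omega_{(12)}}^{(s)}$ with the $\mathbb{S}_n$-invariance of $E^{(n-1)}$, which is why the high-level argument via cocommutativity is preferable.
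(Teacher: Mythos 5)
Your argument is correct, but your preferred route is genuinely different from the paper's. The paper proves the statement by direct computation: it first derives the coassociativity of $E^{1,1}$ from the vanishing of $E^{2,1}$ (via the relation $(1\ot E^{1,1})E^{1,1}-(E^{1,1}\ot 1)E^{1,1}=(\tau\ot 1)E^{2,1}+E^{2,1}$ coming from coassociativity of $\nabla$), observes that each $F^n$ collapses to $(-1)^n$ times the iterated coproduct of $E^{1,1}$, uses cocommutativity and coassociativity to strip off the block permutations introduced by $\mu_{\Omega_{(12)}}^{(s)}$ in \eqref{invHcoal2}, and concludes with the alternating binomial count $\sum_{s=1}^{n}(-1)^{n+s}\binom{n-1}{s-1}=0$ --- i.e.\ precisely the ``combinatorial alternative'' you sketch at the end, down to the same sign bookkeeping. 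Your main route instead notes that the hypotheses $E^{k,1}=0$ for $k\geq 2$ and $E^{1,1}=\tau E^{1,1}$ say exactly that $\nabla$ agrees with $\tau\nabla$ on the generators $\wid{C}$ (a fact the paper itself records in the Remark following Proposition \ref{propobstruction}), propagates this to all of $\Omega C$ by freeness since $\tau$ is a graded algebra endomorphism of $\Omega C\ot\Omega C$, and then invokes Proposition \ref{antip properties}(vi) to get $S^2=id$ and the easy direction of Proposition \ref{propobstruction}(1) to kill all the $O_n$; each of these steps checks out, and no circularity arises because that direction of Proposition \ref{propobstruction} is essentially definitional ($O_n$ is the $n$-th component of $S^2-Id$ on generators). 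Your route is shorter and avoids all Koszul-sign verification; what the paper's computation buys is an explicit view of how the cancellation happens inside each $O_n$ and the intermediate observation that $E^{2,1}=0$ forces $E^{1,1}$ to be coassociative, which is of independent use in the suspension applications. Both arguments are sound.
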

\begin{proof}
 By making explicit the coassociativity of the coproduct $\nabla:\Omega C \to \Omega C$  we obtain, in particular, the equation   
\begin{equation*}
 (1\ot E^{1,1})E^{1,1}- (E^{1,1}\ot 1)E^{1,1} = (\tau\ot 1)E^{2,1} + (1\ot 1)E^{2,1}.
\end{equation*} 
Therefore the triviality of the higher operation $E^{2,1}$ implies that $E^{1,1}$ is coassociative. Together with the vanishing of $O_2$ we obtain a coassociative and cocommutative operation $E^{1,1}$. 
We use this fact to vanish the $O_n$'s :  by \eqref{Fn}, the terms $F^i$, $i\leq n$ involved in $O_n$ are 
\begin{align*}
F^{n}=&(-1)^{n}(1^{\ot n-2}\ot E^{1,1})\cdots (1\ot E^{1,1})E^{1,1}.
\end{align*} 
Now because of the coassociativity of $E^{1,1}$ we can write each term $\mu_{\Omega_{(12)}}^{(s)}(F^{n_1}\ot ...\ot F^{n_s})F^s$ of the sum
\begin{align*}
O_n&=\sum_{s=1}^{n}\sum_{n_1+...+n_s=n}\mu_{\Omega_{(12)}}^{(s)}(F^{n_1}\ot ...\ot F^{n_s})F^s
\end{align*}
as $\pm \sigma\circ F^n$  where $\sigma$ is a permutation (depending of the term we consider). Now using the cocommutativity and again the coassociativity of $E^{1,1}$ we can remove all transpositions to obtain $\pm F^n$. A direct counting shows that positive and negative terms are equal in number :
the sign of  $(F^{n_1}\ot ...\ot F^{n_s})F^s$ is $(-1)^{n_1+...+n_s+s}=(-1)^{n+s}$ and the number of partitions of $n$ into $s$ integers $\geq 1$ is $\binom{n-1}{s-1}$.
\end{proof}
 Therefore we can consider $O_2$ as the only obstruction to the involutivity of the  antipode on the cobar construction. 

\subsection{Homotopy BV-algebra structure on $\Omega^2 C_*(\Sigma^2 X)$}\label{Section5}
Here we prove that the homotopy G-coalgebra structure on $C_*(\Sigma^2 X)$ is trivial and then that $\Omega C_*(\Sigma^2 X)$ is an involutive DG-Hopf algebra with the shuffle coproduct.
\begin{theo}\label{DoubleSuspension}
 Let $\Sigma^2 X$ be a  double suspension. Then :
\begin{itemize}
 \item the homotopy G-coalgebra structure on $C_*(\Sigma^2 X)$ corresponding to Baues' coproduct 
\[
\nabla_0:\Omega C_*(\Sigma^2 X) \to \Omega C_*(\Sigma^2 X)\ot  \Omega C_*(\Sigma^2 X), 
\]
 has trivial higher operations i.e. $E^{1,k}=0$ for $k\geq 1$;
 \item the double cobar construction $\Omega^2 C_*(\Sigma^2 X)$ is a homotopy BV-algebra with the BV-operator from \eqref{BVop}.
\end{itemize}
\end{theo}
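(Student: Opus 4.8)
The plan is to prove the first bullet—the total triviality of the homotopy G-coalgebra operations—and then obtain the second bullet almost for free by feeding this into Proposition \ref{propobstruction}(2). The whole argument turns on one new geometric input: a double suspension enjoys \emph{two} collapsing faces rather than one.

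For the first bullet I would exploit that $\Sigma^2 X=\Sigma(\Sigma X)$ is the simplicial suspension of the simplicial set $\Sigma X$, which is itself a suspension. Applying Proposition \ref{susp1} with base $\Sigma X$ immediately gives $E^{k,1}=0$ for $k\geq 2$, so by the relation $E_0^{k,1}=\pm\tau^{1,k}E^{1,k}$ of Proposition \ref{hGc defprop} it remains only to kill $E^{1,1}$. Proposition \ref{susp1} already reduces it, using that $d_0\sigma$ is degenerate, to
\[
 E^{1,1}(\ds\sigma)=\sum_{1<l<n}\pm\,\ds\sigma(0,\dots,l)\ot \ds\sigma(0,l,l+1,\dots,n).
\]
The extra ingredient for a double suspension is that $d_1\sigma$ is degenerate as well: for a non-degenerate $n$-simplex $\sigma=(1,(1,x))$ one has $d_1(1,y)=(1,d_0 y)$ with $y=(1,x)\in(\Sigma X)_{n-1}$, and $d_0 y=a_{n-2}^{\Sigma X}=s_0^{n-2}(a_0^{\Sigma X})$; the defining identification $(1,s_0^{m}(*))=s_0^{m+1}(a_0)$ then forces $d_1\sigma=s_0^{n-1}(a_0)$.

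With this in hand I would observe that the right-hand factor $\sigma(0,l,l+1,\dots,n)$ is obtained from $\sigma$ by deleting the interior vertices $1,\dots,l-1$, hence equals $d_1^{\,l-1}\sigma$. Since every summand has $l\geq 2$ and $d_1\sigma=s_0^{n-1}(a_0)$, repeated use of the simplicial identity $d_1 s_0=\mathrm{id}$ gives $d_1^{\,l-1}\sigma=s_0^{\,n-l+1}(a_0)$, which is degenerate for all $l<n$. Thus each term vanishes in the normalized chains and $E^{1,1}=0$, completing the first bullet: all operations $E^{1,k}$, $k\geq 1$, are zero. For the second bullet, once every $E^{k,1}$ vanishes, formula \eqref{Fn} yields $F^1=-\mathrm{Id}_{\wid C}$ and $F^n=0$ for $n\geq 2$, because each summand defining $F^n$ carries at least one factor $E^{n_i,1}$. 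Substituting this into \eqref{invHcoal2} annihilates every term of $O_n$ for $n\geq 2$ (a term either contains an $F^s$ with $s\geq 2$, or, when $s=1$, reduces to $\pm F^n$), so all obstructions vanish; equivalently the induced coproduct on $\Omega C_*(\Sigma^2 X)$ is the cocommutative shuffle coproduct, whence $S^2=\mathrm{id}$ already by part (vi) of Proposition \ref{antip properties}. Since a double suspension is $2$-reduced (for $X_0=*$ one checks $\ov{C}_1=\ov{C}_2=0$), Proposition \ref{propobstruction}(2) applies and delivers the homotopy BV-algebra structure on $\Omega^2 C_*(\Sigma^2 X)$ with the Connes-Moscovici operator \eqref{BVop} as BV-operator.

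The main obstacle I anticipate is the bookkeeping behind the second degeneracy: carefully tracking the identifications $(i,s_0^{m}(*))=s_0^{m+i}(a_0)$ through the two nested suspensions to certify that $d_1\sigma$, and hence all its iterated $d_1$-faces, are degenerate. Once this degeneracy statement is secured, the collapse of $E^{1,1}$ to zero and the passage to the BV-structure through Proposition \ref{propobstruction}(2) are purely formal.
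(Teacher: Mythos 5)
Your proposal is correct and follows essentially the same route as the paper: Proposition \ref{susp1} applied to $\Sigma(\Sigma X)$ kills $E^{k,1}$ for $k\geq 2$, the degeneracy of $d_1\sigma$ (hence of $\sigma(0,l,l+1,\dots,n)=d_1^{l-1}\sigma$ for $l>1$) kills $E^{1,1}$, and the vanishing of all $O_n$ then follows formally, so Proposition \ref{propobstruction}(2) applies. Your extra bookkeeping on the iterated $d_1$-faces and on the $2$-reducedness of $C_*(\Sigma^2X)$ only makes explicit what the paper leaves implicit.
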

\begin{proof}
By Proposition \ref{susp1}  the operations $E^{k,1}$ for $k\geq 2$ are trivial. Therefore to prove the first statement it remains to prove that $E^{1,1}$ is trivial. This follows that  $d_1(1,(1,x))=(1,d_0(1,x))=(1,a_n)=s_{n+1}(1,a_0)$ is degenerate.
Indeed, the operation 
 \begin{equation*}
 E^{1,1}(\ds\sigma)= \sum_{1<l< n} \pm\ds\sigma(0,...,l)\ot\ds \sigma(0,l,l+1,...,n)
\end{equation*}
is trivial since $l>1$.
\\
For the second statement since the homotopy G-coalgebra structure is trivial, all the obstructions $O_n$ are zero and so, by Proposition \ref{propobstruction}, we obtain the  announced homotopy BV-algebra structure on $\Omega^2 C_*(\Sigma^2 X)$. 
\end{proof}
We make explicit the homotopy BV-algebra structure on $\Omega^2 C_*(\Sigma^2 X)$. Let us first observe that Baues' coproduct $\nabla_0$ is a shuffle coproduct.
Indeed, the homotopy G-coalgebra structure on $C_*(\Sigma^2 X)$ being trivial, for any element $a\in \ov{C}_*(\Sigma^2 X)$ we have 
\begin{equation*}
 \nabla_0(\ds a)= (E_{1,0} + E_{0,1})(\ds a)=   \ds a \ot 1+ 1\ot \ds a\in \Omega C_*(\Sigma^2 X)\ot \Omega C_*(\Sigma^2 X).
\end{equation*}
Therefore its extension $\nabla_0:\Omega C_*(\Sigma^2 X)\to \Omega C_*(\Sigma^2 X)\ot \Omega C_*(\Sigma^2 X)$ as algebra morphism is the shuffle coproduct, see \cite[Theorem III.2.4]{Kassel}.
We write $\nabla_0$ as 
\begin{align*}
 \nabla_{0}([a_1|...|a_n])=\sum_{I_0\cup I_1}\pm [a_{I_0}]\ot [a_{I_1}]
\end{align*}
where the sum is taken over all partitions $I_0\sqcup I_1$ of $I=\{1,...,n\}$ with $I_0=\{i_1<i_2<...<i_k\}$ and $I_1=\{j_1<j_2<...<j_{n-k}\}$. We denote $a_{I_0}$ to be $a_{i_1}\ot a_{i_2}\ot...\ot a_{i_k}$. Also we denote $a_{I_0^{-1}}$ to be $a_{i_k}\ot ...\ot a_{i_2}\ot a_{i_1}$ and similarly for $a_{I_1}$.\\

Now we make explicit the BV-operator on a $2$ and $3$-tensor. 
For the sake of simplicity we do not keep track of signs; also to avoid confusion we write $\uv{a}=[a_{1}|a_{2}|...|a_{k}]$ for the basic  elements of $\Omega C_*(\Sigma^2 X)$ and $[\uv{a}_{1},\uv{a}_{2},...,\uv{a}_{n}]$ for the basic elements of $\Omega^2 C_*(\Sigma^2 X)$.\\
Thus $[\uv{a}_{1},\uv{a}_{2},...,\uv{a}_{n}]$ is $[[a_{1,1}|a_{1,2}|...|a_{1,k_{1}}],[a_{2,1}|a_{2,2}|...|a_{2,k_{2}}],...,[a_{n,1}|a_{n,2}|...|a_{n,k_{n}}]]$.\\
With these conventions, the BV-operator $\Delta$ is given by 
\begin{align*}
 \Delta([\uv{a},\uv{b}])=\Delta([[a_1|...|a_m],[b_1|...|b_n]])&=[[a_m|...|a_1|b_1|...|b_n]] + [[b_n|...|b_1|a_1|...|a_m]]. \\
 \Delta([[a_1|...|a_m],[b_1|...|b_n],[c_1|...|c_r]])&=[[a_{I_1^{-1}}|b_1|...|b_n],[a_{I_0^{-1}}|c_1|...|c_r]]\\
& +[[b_{I_1^{-1}}|c_1|...|c_r],[ b_{I_0^{-1}}|a_1|...|a_m]] \\
&+[[c_{I_1^{-1}}|a_1|...|a_m],[c_{I_0^{-1}}|b_1|...|b_n]].
\end{align*}
The homotopy $H$ from Proposition \ref{PropBV} is given by
\begin{align*}
 H([\uv{a},\uv{b}])&=0.
 \end{align*}
To know explicitly each term of 
\[
\Delta([\uv{a},\uv{b},\uv{c}])-[\uv{a},\uv{b},\Delta([\uv{c}])]-[\Delta([\uv{a},\uv{b}]),\uv{c}]-\{[\uv{a},\uv{b}] ;[\uv{c}] \} =\partial  H([\uv{a},\uv{b}];[\uv{c}])
\]
we need to know the following terms:
 \begin{align*}
 H([\uv{a},\uv{b}];[\uv{c}])=H([[a_1|...|a_m],[b_1|...|b_n]];[[c_1|...|c_r]])&=\pm [[b_n|...|b_1|a_1|...|a_m|c_1|...|c_r]],
\end{align*}
 \begin{align*}
 H([\uv{a},\uv{b},\uv{c}];[\uv{d}])&=H([[a_1|...|a_m],[b_1|...|b_n],[c_1|...|c_r]];[[d_1|...|d_s]])\\
&= \pm [[c_{I_1^{-1}}|a_1|...|a_m|d_1|...|d_s],[c_{I_0^{-1}}|b_1|...|b_n]]\\
& \pm [[c_{I_1^{-1}}|a_1|...|a_m],[c_{I_0^{-1}}|b_1|...|b_n|d_1|...|d_s]]\\
& \pm [[b_{I_1^{-1}}|c_1|...|c_r],[b_{I_0^{-1}}|c_1|...|c_r|d_1|...|d_s]],
\end{align*}
and
\begin{align*}
 H([\uv{a},\uv{b}];[\uv{c},\uv{d}])&=\pm [[c_{I_1^{-1}}|a_1|...|a_m|d_1|...|d_s],[c_{I_0^{-1}}|b_1|...|b_n]].
\end{align*}

\begin{rmq}
 From a topological point of view, let us consider $X$ to be a connected (countable) CW-complex with one vertex. The James/Milgram's models $J_i(X)$ are H-spaces homotopically equivalent to $\Omega^i\Sigma^i X$, see \cite[theorem 5.2]{Milgram}. Moreover, for $i\geq 1$ the cellular chain complex $C_*(J_{i}(\Sigma X))$ is a cocommutative, primitively generated DG-Hopf algebra,  and $C_*(J_{i+1}(X))$ is isomorphic to $\Omega C_*(J_i(\Sigma X))$, \cite[theorem 6.1 and 6.2]{Milgram}. Then using Proposition \ref{PropBV} we get a homotopy BV-algebra structure on it which is similar to the one obtained in the simplicial context. Also, we have a DGA-quasi-isomorphism 
\begin{center}
\begin{tikzpicture} [>=stealth,thick,draw=black!50, arrow/.style={->,shorten >=1pt}, point/.style={coordinate}, pointille/.style={draw=red, top color=white, bottom color=red}]
\matrix[row sep=9mm,column sep=16mm,ampersand replacement=\&]
{
 \node (b) {$\Omega C_*(J_1(\Sigma X))$}; \& \node (c){$C_*(J_2(X))$};\& \node (d){$C_*(\Omega^2\Sigma^2 X).$};\\
}; 
\path
         (b)     edge[arrow,->]      node[label=above:{\scriptsize}] {} (c)
	 (c)     edge[arrow,->]      node[label=above:{\scriptsize$C_*(j_2)$}] {} (d)
        ; 
\end{tikzpicture}
\end{center}
\end{rmq}

\subsection{Homotopy BV-algebra structure on $\ccob$ over $R\supset\mathbb{Q}$}\label{sectionQ}

In \cite{Bauesgeom}, Baues gives an explicit construction of the cobar construction of a $1$-reduced simplicial set $X$ over the integer coefficient ring as a cocommutative up to homotopy DG-bialgebra. By using methods of \cite{Anick}, he shows that over a ring $R$ containing $\mathbb{Q}$ as a subring, this DG-bialgebra can be deformed into a strictly cocommutative DG-bialgebra \cite[Theorem 4.7]{Bauesgeom}. The latter is isomorphic as DG-Hopf algebras to the universal enveloping algebra of the Lie algebra $L(\ds\ov{C}_*X)$ generated by the desuspension of the reduced coalgebra of normalized chain complex, combine \cite[Theorem 4.8]{Bauesgeom} and \cite[Proposition V.2.4]{Kassel}. Hence, over such a ring $R$, the cobar construction $\cob$ is (can be deformed into) an involutive DG-Hopf algebra.
By  applying Proposition \ref{PropBV}  to the resulting involutive cobar construction $(\cob,\nabla'_0,S')$, we obtain 
\begin{theo}\label{CorBV}
 Let $X$ be a $2$-reduced simplicial set. Then the double cobar construction $\Omega (\cob,\nabla'_0,S')$ over $R\supset\mathbb{Q}$ coefficients is a homotopy BV-algebra  with BV-operator the Connes-Moscovici operator.
\end{theo}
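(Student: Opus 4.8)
The plan is to reduce the statement to Proposition \ref{PropBV} by exhibiting $(\cob,\nabla'_0,S')$ as a \emph{$1$-reduced involutive} DG-Hopf algebra and then applying that proposition verbatim. The construction of the deformed coproduct $\nabla'_0$ is already the content of the discussion preceding the statement: over a ring $R\supset\mathbb{Q}$, Baues' coproduct $\nabla_0$ on $\cob$ deforms, via the methods of \cite{Anick} and \cite[Theorem 4.7]{Bauesgeom}, into a strictly cocommutative coproduct $\nabla'_0$, and $(\cob,\nabla'_0)$ is then isomorphic as a DG-Hopf algebra to the universal enveloping algebra $U(L(\ds\ov{C}_*X))$. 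I would take this deformation as the input and focus the proof on verifying the two hypotheses needed to invoke Proposition \ref{PropBV}, namely involutivity of the antipode and $1$-reducedness of the underlying algebra.

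First I would settle involutivity. Since $(\cob,\nabla'_0)$ is a cocommutative DG-bialgebra, part (vi) of Proposition \ref{antip properties} yields $S'^2=id$ at once, so that $(\cob,\nabla'_0,S')$ is an involutive DG-Hopf algebra with no further computation. Equivalently, cocommutativity forces all the obstructions $O_n$ of \eqref{invHcoal2} to vanish, which is how Proposition \ref{propobstruction} would package the same conclusion through the corresponding (deformed) homotopy G-coalgebra structure on $C_*(X)$; but the direct appeal to cocommutativity via Proposition \ref{antip properties} is the cleaner route here.

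Second I would verify the connectivity hypothesis, since this is precisely where the assumption that $X$ be $2$-reduced is used. If $X$ is $2$-reduced then $C_0=R$ and $C_1=C_2=0$, so $\ov{C}_*(X)$ is concentrated in degrees $\geq 3$; because $(\ds M)_n=M_{n+1}$, the desuspended reduced chains $\wid{C}_*(X)=\ds\ov{C}_*(X)$ are concentrated in degrees $\geq 2$. Consequently the tensor algebra $\cob=T(\ds\ov{C}_*(X))$ satisfies $(\cob)_0=R$ and $(\cob)_1=(\ds\ov{C}_*(X))_1=\ov{C}_2(X)=0$, so $\cob$ is $1$-reduced. This conclusion is insensitive to replacing $\nabla_0$ by $\nabla'_0$, since the deformation alters only the coproduct and not the underlying graded algebra.

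With both hypotheses in hand, I would apply Proposition \ref{PropBV} to the $1$-reduced involutive DG-Hopf algebra $\Hopf:=(\cob,\nabla'_0,S')$: its cobar construction $\Omega\Hopf=\Omega(\cob,\nabla'_0,S')$ is a homotopy BV-algebra whose BV-operator is the Connes-Moscovici operator \eqref{BVop}, which is exactly the assertion. The genuinely hard part is external to this argument and resides in the cited deformation result of Baues and Anick; inside the proof the only point requiring care is the degree bookkeeping that upgrades the $2$-reducedness of $X$ to the $1$-reducedness of $\cob$, as this is what makes Proposition \ref{PropBV} applicable in the first place.
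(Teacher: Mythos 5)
Your proposal is correct and follows essentially the same route as the paper: the paper likewise takes the Baues--Anick deformation $(\cob,\nabla'_0,S')$ into a cocommutative DG-Hopf algebra as given, notes that cocommutativity makes the antipode involutive, and applies Proposition \ref{PropBV}. Your explicit degree count showing that $2$-reducedness of $X$ yields $1$-reducedness of $\cob$ is the (implicit) reason the paper assumes $X$ is $2$-reduced, so you have simply made the paper's one-line argument fully explicit.
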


In the sequel we detail how the antipode is deformed.
We extend the deformation (given in \cite{Bauesgeom}) of the DG-bialgebra structure of the cobar construction to a deformation of the DG-Hopf structure. The resulting cobar construction $(\cob,\nabla'_0,S')$ comes with (anti)derivation homotopies  (see below) connecting the obtained DG-Hopf structure $(\nabla'_0,S')$ with the initial one $(\nabla_0,S)$.\\

First of all we extend some definitions from \cite{Bauesgeom}.
\newcommand{\cpr}{\overline{\nabla}}
\\

Let \textbf{DGA}$_0$ be the category of connected DG-algebras (associative). For a free DG-module $V$, we denote by $L(V)$ the DG-Lie algebra which is the  free graded Lie algebra on the free module $V$.
We set $V_{\leq n}$ (resp. $V_{< n}$)  the sub DG-module of elements $v\in V$ such that $|v|\leq n$ (resp. $|v|<n$).
 Similarly, $V_{\geq n}$ (resp. $V_{> n}$)  denotes the subset of elements $v\in V$ such that $|v|\geq n$ (resp. $|v|>n$).
\begin{de}
Let $f,g:A\to B$ be two maps between two DG-modules $A,B$. A \textbf{derivation homotopy} between $f$ and $g$ is a map $F:A\to B$ satisfying 
\begin{align}
dF+Fd&=f-g \\
F(ab)&=F(a)g(b)+(-1)^{|F||a|}f(a)F(b)~~ \forall a,b\in A. 
\end{align}
An \textbf{antiderivation homotopy} between $f$ and $g$ is a map $\Gamma:A\to B$ satisfying
\begin{align}
d\Gamma+\Gamma d&=f-g \\
\Gamma(ab)&=(-1)^{|a||b|}(\Gamma(b)g(a)+(-1)^{|\Gamma||a|}f(b)\Gamma(a))~~ \forall a,b\in A.
\end{align} 
\end{de}
\begin{de}
 A \textbf{homotopy DG-bialgebra} $(A,\nabla,G_1,G_2)$ is an object $A$ in $\textbf{DGA}_0$ together with a coproduct $\nabla$ in $\textbf{DGA}_0$ making $(A,\nabla)$ into a coalgebra in $\textbf{DGA}_0$ cocommutative up to $G_1$, coassociative up to $G_2$, both being derivation homotopies.
\end{de}
\begin{de}
Let $(A,\nabla,G_1,G_2)$ be a homotopy DG-bialgebra such that $A=TV$, $V$ being a DG-module with $V_0=0$. The counit is the augmentation of $TV$.
We call $(A,\nabla,G_1,G_2,V)$  \textbf{$n$-good} if the following conditions \eqref{ngood1}, \eqref{ngood2},\eqref{ngood3} hold.
\begin{align}
\nabla=\tau \nabla ~\text{and}~ (\nabla\ot1)\nabla=(1\ot \nabla)\nabla ~\text{on} ~ V_{\leq n}  \label{ngood1}\\
d(V_{\leq n+1})\subset L(V_{\leq n}) \subset T(V)=A \label{ngood2}\\
V_{\leq n} \subset \ker (\overline{\nabla}).\label{ngood3}
\end{align}
If moreover there exists a chain map $S:A\to A$ such that the condition: 
\begin{equation}
\mu(1\ot S)\nabla =\eta\epsilon=\mu(S\ot 1)\nabla ~~\text{on}~~ V_{\leq n}
\end{equation}
is satisfied, then $A$ is called \textbf{$n$-good$^+$}. 
\end{de}

\begin{lem}\label{googHopf}
 Let $\mathcal{A}:=(A=TV,\nabla,G_1,0,S, V)$ be a homotopy DG-bialgebra with antipode $S$. We suppose it is $n$-good$^+$ for a map $S:A\to A$. 
Then there exists a homotopy DG-bialgebra $(A,\nabla^{n+1},G_1^{n+1},G_2^{n+1},S^{n+1},V^{n+1})$ which both extends $\mathcal{A}$ and is $(n+1)$-good$^+$.
Moreover, there is a derivation homotopy $F^{n+1}:\nabla\simeq \nabla^{n+1}$ and an antiderivation homotopy $\Gamma^{n+1}:S\simeq S^{n+1}$ with $F^{n+1}(a)=0$ and $\Gamma^{n+1}(a)=0$ for $|a|<n$, $a\in A$.
\end{lem}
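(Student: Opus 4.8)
The plan is to run the same one-step inductive deformation that Baues uses to strictify the bialgebra structure \cite[Theorem 4.7]{Bauesgeom}, but to carry the antipode along at each stage. Since $\mathcal{A}$ is $n$-good$^+$, conditions \eqref{ngood1} and \eqref{ngood3} say that every generator of degree $\leq n$ is primitive, so the sub-DG-algebra $H:=T(V_{\leq n})$ is a strictly cocommutative, coassociative sub-DG-bialgebra of $A$; because $R\supset\mathbb{Q}$ and $d(V_{\leq n})\subset L(V_{\leq n})$ by \eqref{ngood2}, it is the universal enveloping algebra $U(L(V_{\leq n}))$ with antipode the restriction of $S$ (combine with \cite[Proposition V.2.4]{Kassel}). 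On $H$ the antipode is therefore $-\mathrm{id}$ on primitives, in particular $S=-\mathrm{id}$ on all of $L(V_{\leq n})$. The whole problem thus reduces to extending this strict Hopf picture by one degree.

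First I would handle the bialgebra data, following \cite{Bauesgeom} and the averaging method of \cite{Anick}. For a generator $v\in V_{n+1}$ the reduced coproduct $\overline{\nabla}(v)$ lies in $H\otimes H$ for degree reasons, each tensor factor having positive degree $\leq n$. The $n$-good$^+$ hypotheses make the obstruction to primitivizing $v$ vanish over $\mathbb{Q}$: symmetrization in $H$ produces a decomposable $w\in H$ with $\overline{\nabla}(w)=\overline{\nabla}(v)$, so that $v':=v-w$ is primitive. Replacing each $v\in V_{n+1}$ by $v'$ defines $V^{n+1}$ (unchanged below degree $n+1$), and I let $\nabla^{n+1}$ be the unique algebra map that is primitive on $V^{n+1}_{\leq n+1}$. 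Then \eqref{ngood1} holds on $V^{n+1}_{\leq n+1}$ automatically and \eqref{ngood3} holds by construction; \eqref{ngood2} in degree $n+2$ follows because over $\mathbb{Q}$ the primitives of the extended Hopf algebra are exactly $L(V^{n+1}_{\leq n+1})$ (Milnor--Moore) and $d$, commuting with $\nabla^{n+1}$, preserves primitives. The corrections $w$ assemble into a derivation homotopy $F^{n+1}\colon\nabla\simeq\nabla^{n+1}$ that vanishes in degrees $<n$, and $G_1^{n+1},G_2^{n+1}$ are obtained by updating $G_1,G_2$ exactly as in \cite{Bauesgeom}.

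The new content is the antipode. I set $S^{n+1}:=S$ on $H$ and $S^{n+1}(v'):=-v'$ on the new generators, extended as an anti-algebra map. This is a chain map: for $v'\in V^{n+1}_{n+1}$ one has $dv'\in L(V^{n+1}_{\leq n})$, where $S^{n+1}=S=-\mathrm{id}$, so $S^{n+1}(dv')=-dv'=dS^{n+1}(v')$. The $(n+1)$-good$^+$ condition is then immediate on the primitive generators, since for $v'$ of positive degree $\mu(1\otimes S^{n+1})\nabla^{n+1}(v')=v'-v'=0=\eta\epsilon(v')$, and likewise for $\mu(S^{n+1}\otimes 1)\nabla^{n+1}$; equivalently $S^{n+1}$ is the map given by the recursion \eqref{antpodeconst} for $\nabla^{n+1}$. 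Both $S$ and $S^{n+1}$ restrict to $S|_H$, so $S-S^{n+1}$ is supported in degrees $\geq n+1$. To produce the antiderivation homotopy $\Gamma^{n+1}\colon S\simeq S^{n+1}$ I would define $\Gamma^{n+1}$ on the generators $V$ by solving $d\Gamma^{n+1}+\Gamma^{n+1}d=S-S^{n+1}$ degree by degree, which is possible because $S-S^{n+1}$ is a difference of anti-algebra chain maps concentrated in high degree and $A=TV$ is free and connected, and then extend it to all of $A$ by the antiderivation Leibniz rule; the fact that $S$ and $S^{n+1}$ are both anti-multiplicative is exactly what makes this extension consistent and forces $\Gamma^{n+1}(a)=0$ for $|a|<n$.

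The hard part will be the simultaneous control of all three structures in the primitivization step: one must choose the decomposable correction $w$ so that $v'=v-w$ is not only primitive for $\nabla^{n+1}$, but also satisfies $dv'\in L(V^{n+1}_{\leq n})$ and is compatible with the antipode recursion, and this triple compatibility is precisely what forces $R\supset\mathbb{Q}$. The rational coefficients allow averaging over the symmetric group (Maschke's theorem, the Dynkin--Eulerian idempotents), which both supplies the primitive projection yielding $w$ and guarantees that the primitives coincide with the free Lie algebra, so that \eqref{ngood2} and the antipode condition survive the deformation. The remaining work beyond Baues's bialgebra case is the bookkeeping that verifies $F^{n+1}$ and $\Gamma^{n+1}$ obey the (anti)derivation Leibniz identities of the definition, rather than merely the chain-homotopy identity $dF+Fd=f-g$.
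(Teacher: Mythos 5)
Your overall strategy matches the paper's: run Baues' one-degree deformation step for the bialgebra data and carry the antipode along, defining the corrected antipode to be $-\mathrm{id}$ on the new primitive generators. That part is sound, and your $S^{n+1}$ agrees with the paper's (which is written as $S^{n+1}=S-R^{n+1}$ with $R^{n+1}=\mu(1\ot S)\nabla^{n+1}-\eta\epsilon$; on a generator $v'$ that is primitive for $\nabla^{n+1}$ one has $R^{n+1}(v')=v'+S(v')$, so $S^{n+1}(v')=-v'$, exactly your definition).

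The genuine gap is in the construction of the antiderivation homotopy $\Gamma^{n+1}$. You propose to ``solve $d\Gamma^{n+1}+\Gamma^{n+1}d=S-S^{n+1}$ degree by degree, which is possible because $S-S^{n+1}$ is \dots concentrated in high degree and $A=TV$ is free and connected.'' Freeness of $TV$ as an algebra does not make the underlying chain complex acyclic, and a degree-$0$ chain map that vanishes in low degrees need not be null-homotopic; at each degree the degree-by-degree procedure meets an obstruction (the class of $(S-S^{n+1}-\Gamma^{n+1}d)$ on generators must be a boundary), and nothing in your argument kills it. The paper avoids this entirely by exhibiting a closed formula: $\Gamma^{n+1}:=-\mu(1\ot S)F^{n+1}$, where $F^{n+1}$ is the derivation homotopy $\nabla\simeq\nabla^{n+1}$ already produced by Baues' step. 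Then
\begin{equation*}
d\Gamma^{n+1}+\Gamma^{n+1}d=-\mu(1\ot S)\bigl(dF^{n+1}+F^{n+1}d\bigr)=\mu(1\ot S)\bigl(\nabla^{n+1}-\nabla\bigr)=R^{n+1}=S-S^{n+1},
\end{equation*}
and the support condition $\Gamma^{n+1}=0$ in degrees $<n$ is inherited from $F^{n+1}$. This explicit link between the coproduct homotopy and the antipode homotopy is the one idea your proof is missing; without it (or some substitute acyclicity argument) the existence of $\Gamma^{n+1}$ is not established. A secondary, smaller point: you assume $S$ restricts to the canonical antipode of $T(V_{\leq n})$ and is globally anti-multiplicative, which is more than the $n$-good$^+$ hypothesis literally grants (it only gives a chain map satisfying the antipode identity on $V_{\leq n}$); the paper's formula for $\Gamma^{n+1}$ does not need this.
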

\begin{proof}
The part $(n+1)$-good as bialgebra is already done in \cite[Theorem 4.5]{Bauesgeom} where $\nabla^{n+1},G_1^{n+1}$,$G_2^{n+1},F^{n+1}$ and $V^{n+1}$ are defined. 
Therefore we only need to construct $S^{n+1}$ and $\Gamma^{n+1}$.\\
We recall that $(V^{n+1})_k=V_k$ for $k\neq n+1,n+2$.
Let $S^{n+1}=S-R^{n+1}$ where $R^{n+1}=\mu(1\ot S)\nabla^{n+1} -\eta\epsilon$. Then $R^{n+1}=0$ on $V_{\leq n}$.
On $(V^{n+1})_{n+1}$ we have
\begin{align*}
  \mu(1\ot S^{n+1})\nabla^{n+1} &= \mu(1\ot S)\nabla^{n+1} - \mu(1\ot R^{n+1})\nabla^{n+1} \\
&= \mu(1\ot S)\nabla^{n+1} - R^{n+1}  \\
&= \mu(1\ot S)\nabla^{n+1} - \mu(1\ot S)\nabla^{n+1} +\eta\epsilon \\
&= \eta\epsilon.
\end{align*}
The homotopy $F^{n+1}$ between $\nabla$ and $\nabla^{n+1}$ is such that $F^{n+1}=0$ on $(V^{n+1})_{<n}\cup (V^{n+1})_{>n+1}$.
Setting $\Gamma^{n+1}=-\mu(1\ot S)F^{n+1}$ we obtain the desired homotopy. Indeed, 
\begin{align*}
d\Gamma^{n+1}+\Gamma^{n+1}d=-\mu(1\ot S)(dF^{n+1}+F^{n+1}d)=\mu(1\ot S)(\nabla^{n+1}-\nabla) =R^{n+1}=S-S^{n+1}. 
\end{align*}
And $\Gamma^{n+1}$ is such that $\Gamma^{n+1}=0$ on $(V^{n+1})_{<n}\cup (V^{n+1})_{>n+1}$.\\
Next we extend $S^{n+1}$ on $T((V^{n+1})_{\leq n+1})$ as an algebra antimorphism and we extend  $\Gamma^{n+1}$ as an antiderivation homotopy.
We have $\mu(S^{n+1}\ot 1)\nabla^{n+1}=\eta\epsilon$ on $(V^{n+1})_{\leq n+1}$ since $\nabla^{n+1}$ is coassociative on $(V^{n+1})_{\leq n+1}$.
\end{proof}

\begin{lem}\label{ngoodlr}
Let $(A=TV,\nabla,G_1,S)$ be a DG-Hopf algebra cocommutative up to the homotopy $G_1$ and with $S$ as antipode. Suppose that $(A,\nabla,G_1,S,V)$ is $1$-good$^+$.
Then there is a coproduct $\nabla'$ and an antipode $S'$ such that $(A,\nabla',S')$ is a cocommutative DG-Hopf algebra.  Moreover there is a  derivation homotopy $F:\nabla\simeq \nabla'$ and an antiderivation homotopy $\Gamma:S\simeq S'$.
\end{lem}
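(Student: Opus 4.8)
The plan is to iterate Lemma \ref{googHopf} and then pass to the limit. By hypothesis $(A,\nabla,G_1,S,V)$ is $1$-good$^+$; since a DG-Hopf algebra is strictly coassociative, this is exactly the input $(A=TV,\nabla,G_1,0,S,V)$ required to start the induction, with coassociativity homotopy $G_2=0$.

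First I would construct, by induction on $n\geq 1$, a sequence of $n$-good$^+$ homotopy DG-bialgebras $(A,\nabla^n,G_1^n,G_2^n,S^n,V^n)$ with $(\nabla^1,S^1,V^1)=(\nabla,S,V)$. At the $n$-th step Lemma \ref{googHopf} provides the $(n+1)$-good$^+$ extension together with a derivation homotopy $F^{n+1}\colon\nabla^n\simeq\nabla^{n+1}$ and an antiderivation homotopy $\Gamma^{n+1}=-\mu(1\ot S^n)F^{n+1}\colon S^n\simeq S^{n+1}$, both of which vanish on elements of degree $<n$. The bialgebra part of this induction is Baues' construction \cite[Theorem 4.5]{Bauesgeom}, whose iteration is \cite[Theorem 4.7]{Bauesgeom}; the only new content is the simultaneous extension of the antipode and of its connecting homotopy carried out in Lemma \ref{googHopf}.

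Next I would pass to the limit. Because $V_0=0$ and the grading is bounded below we have $V=\bigcup_m V_{\leq m}$, and because $F^{n+1}$ and $\Gamma^{n+1}$ vanish below degree $n$, the sequences $(\nabla^n)$ and $(S^n)$ stabilize on every fixed element. I would define $\nabla'$ and $S'$ to be these stable values, so that $\nabla'=\nabla+\sum_{n\geq 1}(\nabla^{n+1}-\nabla^n)$ and analogously for $S'$, the sums being finite on each element. By $n$-goodness $\nabla^n$ is strictly coassociative and cocommutative on $V_{\leq n}$; letting $n\to\infty$ shows $\nabla'$ is coassociative and cocommutative on all generators, hence, extended as an algebra morphism on $A=TV$, a genuine cocommutative coproduct. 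Likewise the identities $\mu(1\ot S^n)\nabla^n=\eta\epsilon=\mu(S^n\ot 1)\nabla^n$ valid on $V_{\leq n}$ pass to $V$ in the limit, so $S'$ is a two-sided antipode for $\nabla'$ (cf. Proposition \ref{antip properties}) and $(A,\nabla',S')$ is a cocommutative DG-Hopf algebra. Finally, setting $F=\sum_{n\geq 2}F^n$ and $\Gamma=\sum_{n\geq 2}\Gamma^n$ and telescoping the relations $dF^{n+1}+F^{n+1}d=\nabla^n-\nabla^{n+1}$ and $d\Gamma^{n+1}+\Gamma^{n+1}d=S^n-S^{n+1}$ gives $dF+Fd=\nabla-\nabla'$ and $d\Gamma+\Gamma d=S-S'$.

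The hard part is bookkeeping rather than conceptual. One must check that the assembled $F$ and $\Gamma$ are genuinely a derivation, respectively antiderivation, homotopy and not merely chain homotopies: since each $F^{n}$ is a derivation homotopy between consecutive coproducts and, by the degree restriction, at most one summand is active on a given pair of tensor factors, the multiplicative identities propagate to the sum, and similarly for $\Gamma$. A secondary point to verify with care is that coassociativity is preserved strictly at every stage, so that the input hypothesis $G_2=0$ of Lemma \ref{googHopf} is maintained throughout the induction; this is ensured by Baues' construction, in which the coproduct is built to remain coassociative on the relevant filtration degrees.
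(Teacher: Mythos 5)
Your proof is correct and takes essentially the same route as the paper's: iterate Lemma \ref{googHopf} to obtain the $n$-good$^+$ structures, use the vanishing of $F^{n+1}$ and $\Gamma^{n+1}$ in degrees $<n$ to see that $\nabla^n$ and $S^n$ stabilize on each element, and define $\nabla'$, $S'$ as the stable values and $F$, $\Gamma$ as the (elementwise finite) sums of the connecting homotopies. The paper's proof is a terser version of exactly this argument; your additional checks (finiteness of the sums, propagation of the derivation/antiderivation identities, preservation of strict coassociativity so that the $G_2=0$ hypothesis persists) are sensible elaborations of points the paper leaves implicit.
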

\begin{proof}
An iteration of Lemma \ref{googHopf} yields, for each $n\geq 1$, an $n$-good$^+$ homotopy DG-bialgebra $(A,\nabla^{n+1},G_1^{n+1},G_2^{n+1},S^{n+1},V^{n+1})$. 
Hence we define $\nabla'(v)=\nabla^n(v)$ and $S'(v)=S^n(v)$ for $|v|=n$.
We have $\mu(1\ot S')\nabla'=\eta\epsilon$.
The derivation homotopy  $F$ is defined as $\sum_{n\geq 1}F^{n}$ and 
the antiderivation homotopy $\Gamma$ is defined as $\sum_{n\geq 1}\Gamma^{n}$.
\end{proof}

\begin{prop}\label{HopfcobC}
 Let $\mathbb{Q}\subset R$ and let $X$ be a $1$-reduced simplicial set. Then there is both a coproduct $\nabla'$ and an antipode $S'$ on the cobar construction $\Omega C_*X$ such that $(\cob,\nabla',S')$ is a cocommutative DG-Hopf algebra. Moreover there is a derivation homotopy $F:\nabla_0\simeq \nabla'$ and an antiderivation homotopy $\Gamma:S_0\simeq S'$, where $\nabla_0$ and $S_0$ are respectively Baues' coproduct \cite[(3)]{Bauesgeom} and the associated antipode. 
\end{prop}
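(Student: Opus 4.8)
The plan is to recognize $\cob=T(\ds\ov{C}_*X)$ as a $1$-good$^+$ homotopy DG-bialgebra and then to invoke Lemma \ref{ngoodlr}. Set $A=\cob$ and $V=\ds\ov{C}_*X$, so that $A=TV$. Since $X$ is $1$-reduced we have $C_0=R$ and $C_1=0$, hence $\ov{C}_*X$ is concentrated in degrees $\geq 2$ and $V$ in degrees $\geq 1$; in particular $V_0=0$, as required by the definition of an $n$-good homotopy DG-bialgebra. Baues' coproduct $\nabla_0$ of \eqref{coprod} makes $(A,\nabla_0)$ into a strictly coassociative DG-bialgebra which is cocommutative up to a derivation homotopy $G_1$---the homotopy $\nabla_1$ arising from the third stage filtration $F_3\chi$---and, since $A$ is connected, it carries a unique associated antipode $S_0$. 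Thus $(A,\nabla_0,G_1,0,S_0,V)$ is exactly the kind of homotopy DG-bialgebra with antipode that serves as input to Lemma \ref{ngoodlr}, the $0$ recording that $G_2=0$ because $\nabla_0$ is strictly coassociative.

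Next I would verify the $1$-good$^+$ conditions on $V_{\leq 1}=\ds\ov{C}_2$. The decisive low-degree fact is that $\ov{\nabla}_C$ vanishes on $\ov{C}_{\leq 3}$ for degree reasons: for $c\in\ov{C}_2$ or $c\in\ov{C}_3$ the reduced coproduct would land in $\sum_{i+j=|c|,\,i,j\geq 1}\ov{C}_i\ot\ov{C}_j$, which is zero because $\ov{C}_1=0$. Consequently every generator of $V_{\leq 1}$ is primitive for $\nabla_0$; indeed, evaluating \eqref{coprod} on $\ds\sigma$ with $\sigma\in X_2$ and dropping the degenerate $1$-faces yields $\nabla_0(\ds\sigma)=\ds\sigma\ot 1+1\ot\ds\sigma$. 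This gives at once $V_{\leq 1}\subset\ker(\ov{\nabla})$, that is \eqref{ngood3}, and makes both cocommutativity and coassociativity hold on $V_{\leq 1}$, that is \eqref{ngood1}. For \eqref{ngood2}, the quadratic part $(\ds\ot\ds)\ov{\nabla}_C$ of the cobar differential vanishes on $V_{\leq 2}=\ds\ov{C}_{\leq 3}$, so $d_{\Omega}$ is purely linear there and $d_{\Omega}(V_{\leq 2})\subset\ds\ov{C}_2=V_{\leq 1}\subset L(V_{\leq 1})$. Finally, since each $v\in V_{\leq 1}$ is primitive we have $S_0(v)=-v$, whence $\mu(1\ot S_0)\nabla_0(v)=v+S_0(v)=0=\eta\epsilon(v)$, and symmetrically for $\mu(S_0\ot 1)\nabla_0$; this is precisely the $1$-good$^+$ condition.

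With $(A,\nabla_0,G_1,0,S_0,V)$ shown to be $1$-good$^+$, Lemma \ref{ngoodlr} applies verbatim and produces a coproduct $\nabla'$ and an antipode $S'$ making $(\cob,\nabla',S')$ a cocommutative DG-Hopf algebra, together with a derivation homotopy $F\colon\nabla_0\simeq\nabla'$ and an antiderivation homotopy $\Gamma\colon S_0\simeq S'$. The hypothesis $\mathbb{Q}\subset R$ enters only through this last step: the iterative extension underlying Lemma \ref{ngoodlr}, namely Lemma \ref{googHopf} resting on \cite[Theorem 4.5]{Bauesgeom}, requires the ground ring to contain $\mathbb{Q}$. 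I expect the main obstacle to be bookkeeping rather than conceptual, namely checking that the cocommutativity homotopy $G_1$ supplied by Baues is genuinely a \emph{derivation} homotopy in the sense of our definition and that the strict coassociativity of $\nabla_0$ permits $G_2=0$; both are contained in Baues' analysis, so the remaining work is simply matching his hypotheses to ours.
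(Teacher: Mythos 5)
Your proposal is correct and follows exactly the paper's route: the paper's proof consists of the single observation that $(\cob,\nabla_0,G_1,0,S_0,\ds\ov{C}_*X)$ is $1$-good$^+$ and then invokes Lemma \ref{ngoodlr}. Your explicit low-degree verification of conditions \eqref{ngood1}--\eqref{ngood3} and of the antipode condition on $V_{\leq 1}=\ds\ov{C}_2$ (using that $1$-reducedness forces primitivity there) is exactly the check the paper leaves implicit.
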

\begin{proof}
 We denote by $G_1$ the homotopy to the cocommutativity of the coproduct $\nabla_0$. It is defined in \cite[(4)]{Bauesgeom}. We apply Lemma \ref{ngoodlr} to $(\cob,\nabla_{0},G_1,0,S,\ds\ov{C}_*X)$ which is $1$-good$^+$.
\end{proof}

\section*{Appendix}
\addcontentsline{toc}{section}{Appendix}
Here we recall and develop some facts about the Hirsch and the homotopy G-algebras. 
A Hirsch $(\La,d_{\La},\cdot,\{E_{1,k}\}_{k\geq 1})$ corresponds to a product $\mu_E:\Ba \La \ot \Ba \La \to \Ba \La$ such that $(\Ba \La,d_{\Ba \La}, \mu_E)$ is a unital DG-bialgebra.\\
We write down the relations among the $E_{i,j}$, $i,j\geq 1$ coming from both the associativity of $\mu_E$ and the Leibniz relation $d_{\Ba \La}\mu_E = \mu_E(d_{\Ba \La}\ot 1 + 1\ot d_{\Ba \La})$.\\
We detail the construction of the operation $E_{1,1}$ defined in \eqref{E11}.\\
\subsubsection*{Unit condition}  
For all  $\underline{\s a}=\s a_1\ot ...\ot \s a_i\in B\La$ we have: 
\begin{equation}
\mu_{E}( 1_{\La}\ot \underline{\s a})=\mu_{E}(\underline{\s a}\ot 1_{\La})=\underline{\s a} 
\end{equation}
The product being determined by its projection on $\La$ we have:
\begin{align*}
 pr\mu_E (1_{\La}\ot \underline{\s a})=\wid{E}_{0,i}( 1_{\La}\ot \underline{\s a})=pr (\s a_1\ot ...\ot \s a_i)=\begin{cases}
                                                                                                               a_1 & \text{if} ~~i=1;\\
													      0 &  \text{if} ~~i\neq 1,
                                                                                                              \end{cases}
\end{align*}
and also the symmetric relation.
Thus,
\begin{align}
 E_{0,i}=E_{i,0}=0 ~~\text{for all}~~ & i\neq 1 & \text{and}~~& & E_{0,1}=E_{1,0}=Id_{\La}.
\end{align}
\subsubsection*{Associativity condition} 
With the sign convention \eqref{conv}, the associativity of $\mu_E$ gives on $\La^{\ot i}\ot \La^{\ot j}\ot \La^{\ot k}$:
\begin{multline}\label{assocGerst}
E_{1,k}(E_{i,j}(a_1,...,a_{i};b_1,...b_{j});c_1,...,c_k)+\\
 \sum_{n=1}^{i+j} \sum_{\substack{ 0\leq i_1 \leq ... \leq i_n \leq i \\ 0\leq j_1 \leq ... \leq j_n \leq j  }} 
(-1)^{\alpha_1} E_{n+1,k}(E_{i_1,j_1}(a_1,...,a_{i_1};b_1,...b_{j_1}),...\\...,E_{i-i_n,j-j_n}(a_{i_n+1},...,a_{i};b_{j_n+1},...b_{j});c_1,...,c_k)\\
= \sum_{m=1}^{j+k} \sum_{\substack{ 0\leq j_1 \leq ... \leq j_m \leq j \\ 0\leq k_1 \leq ... \leq k_m \leq k  }} 
(-1)^{\alpha_2} E_{i,m+1}(a_1,...,a_i;E_{j_1,k_1}(b_1,...,b_{j_1};c_1,...c_{k_1}),...\\...,E_{j-j_m,k-k_m}(b_{j_m+1},...,b_{j};c_{k_m+1},...c_{k}))\\
+ E_{i,1}(a_1,...,a_i;E_{j,k}(b_1,...,b_{j};c_1,...c_{k}))
\end{multline}
where 
\begin{align*}
 \alpha_1 =&\sum_{u=1}^{n} \left(\sum_{s=i_{u}+1}^{i_{u+1}}|a_{s}|+i_{u+1}-i_{u}\right)\left( \sum_{s=1}^{j_{u}}|b_{s}|+j_{u}\right)
\end{align*}
\begin{align*}
 \alpha_2 = &\sum_{u=1}^{m} \left(\sum_{s=j_{u}+1}^{j_{u+1}}|b_{s}|+j_{u+1}-j_{u}\right)\left( \sum_{s=1}^{k_{u}}|c_{s}|+k_{u}\right)
\end{align*}
with
\begin{align*}
i_0=0; ~~~j_0=0; ~~~ k_0=0;\\
i_{n+1}=i;  ~~~j_{n+1}=j_{m+1}=j; ~~~ k_{m+1}=k. \\ 
\end{align*}
For $i=j=k=1$, that is on $\La \ot \La\ot \La$, the relation \eqref{assocGerst} gives:
\begin{multline}\label{assoE}
 E_{1,1}(E_{1,1}(a;b);c)=E_{1,1}(a;E_{1,1}(b;c))+ E_{1,2}(a;b,c)+(-1)^{(|b|-1)(|c|-1)}E_{1,2}(a;c,b)\\
-E_{2,1}(a,b;c) -(-1)^{(|a|-1)(|b|-1)}E_{2,1}(b,a;c).
\end{multline}

\subsubsection*{Leibniz relation}
On $\La^{\ot i}\ot \La^{\ot j}$, the projection of $d_{\Ba}\mu_E= \mu_E(d_{\Ba}\ot 1+ 1\ot d_{\Ba})$ gives:
\begin{multline}\label{dgmap}
d_{\La}E_{i,j}(a_1,...,a_i;b_1,...,b_j) \\
+ \sum_{\substack{ 0\leq i_1 \leq i \\ 0\leq j_1 \leq j  }} (-1)^{\beta_2}E_{i_1,j_1}(a_1,...,a_{i_1};b_1,...,b_{j_1})\cdot E_{i-i_1,j-j_1}(a_{i_1+1},...,a_{i};b_{j_1+1},...,b_{j}) =\\
= \sum_{l=1}^{i} (-1)^{\beta_3}E_{i,j}(a_1,...,d_{\La}(a_l),...,a_i;b_1,...,b_j)\\
+ \sum_{l=1}^{i-1}(-1)^{\beta_4}E_{i-1,j}(a_1,...,a_l\cdot a_{l+1},...,a_i;b_1,...,b_j)\\
+\sum_{l=1}^{j} (-1)^{\beta_5}E_{i,j}(a_1,...,a_i;b_1,...,d_{\La}(b_l),...,b_j) \\
+ \sum_{l=1}^{j-1}(-1)^{\beta_6}E_{i,j-1}(a_1,...,a_i;b_1,...,b_l\cdot b_{l+1},...,b_j)
\end{multline}
where
\begin{align*}
\beta_2=& \sum_{s=1}^{i_1} |a_s| +\sum_{s=1}^{j_1} |b_s| + i_1 +j_1 
+ \left(\sum_{s=i_1+1}^{i}|a_s|+i-i_1\right)\left(\sum_{s=1}^{j_1}|b_s|+j_1\right)\\
\beta_3=& \eta_0(\underline{\s a})  \\
\beta_4 =& \eta_1(\underline{\s a}) \\
\beta_5=& \sum_{s=1}^{i} |a_s| + i +  \eta_0(\underline{\s b}) \\
\beta_6 =&  \sum_{s=1}^{i} |a_s| + i+ \eta_1(\underline{\s b}).
\end{align*}
For $\underline{\s a}:=\s a_1\ot \cdots \ot \s a_i $, the following signs  
\begin{align*}
\eta_0(\underline{\s a})&= \sum_{s=1}^{l-1} |a_s|+ l  \\
\eta_1(\underline{\s a})&= \sum_{s=1}^{l} |a_s|+ l,
\end{align*}
are the signs of the differential of the bar construction.
And similarly for $\underline{\s b}=\s b_1\ot \cdots \ot \s b_j$.
\begin{rmq}
When the twisting cochain $\wid{E}:\Ba \La  \ot \Ba \La  \to \La $  has $E_{i,j}=0$ except $E_{1,0}$ and $E_{0,1}$, then $\La$ is a commutative DG-algebra.
\end{rmq}
We recall (cf. Definition \ref{def: homotopy G}) that when the twisting cochain $\wid{E}:\Ba \La  \ot \Ba \La  \to \La $ satisfies $\wid{E}_{i,j}=0$ for $i\geq 2$, then $(\La,d,\mu_{\La},\{E_{1,j}\})$ is called a homotopy G-algebra.
\begin{rmq}\label{rmq: ideal bar}{\cite[Proposition 3.2]{LodayRonco}}
The condition $E_{i,j}=0$ for $i\geq 2$ is equivalent to the following condition:  for each integer $r$,  $I_r:=\oplus_{n\geq r} (\ds\ov{\La})^{\ot n}$ is a right ideal for the product $\mu_{E}:\Ba\La \ot \Ba\La \to \Ba\La$.
\end{rmq}
For a homotopy G-algebra $\La$, the equation \eqref{dgmap} gives the three following equalities.\\
\\
On $\La^{\ot 1}\ot \La^{\ot 1}$:
\begin{align}\label{diff11}
 d_{\La}E_{1,1}(a;b)-E_{1,1}(d_{\La}a;b)+(-1)^{|a|}E_{1,1}(a;d_{\La}b)=(-1)^{|a|}\Bigl(a\cdot b -(-1)^{|a||b|}b\cdot a\Bigr).
\end{align}
On  $\La^{\ot 2}\ot \La^{\ot 1}$:
\begin{align}\label{der11}
  E_{1,1}(a_1\cdot a_2;b)=a_1\cdot E_{1,1}(a_2;b)+(-1)^{|a_2|(|b|-1)}E_{1,1}(a_1;b)\cdot a_2.
\end{align}
On $\La^{\ot 1}\ot \La^{\ot 2}$:
\begin{equation}\label{der21}
\begin{split}
  d_{\La}E_{1,2}(a;b_1,b_2)+E_{1,2}(d_{\La}a;b_1,b_2) +(-1)^{|a|+1} E_{1,2}(a;d_{\La}b_1,b_2)+ (-1)^{|a|+|b_1|} E_{1,2}(a;b_1,d_{\La}b_2)\\ =(-1)^{|a|+|b_1|+1} \Bigl( E_{1,1}(a;b_1)b_2 + (-1)^{(|a|-1)|b_1|} b_1E_{1,1}(a;b_2)-E_{1,1}(a;b_1b_2)\Bigr).
\end{split}
\end{equation}

\subsection*{The sign in \eqref{E11}}
Now we give the construction of the operation $E_{1,1}$ in \eqref{E11}.
Recall that $B$ is a DG-bialgebra and that $E_{1,1}:\La \ot \La \to \La$ with $\La:=\Omega B$.\\
First,  we set $E_{1,1}(\ds a;\ds b):=\ds(a\cdot b)$ for all $a,b\in B$. Next, using the equation \eqref{der11}, we extend this to:
\begin{align*}
 E_{1,1}(\ds a_1\ot ... \ot \ds a_m ; \ds b):= \sum_{l=1}^{m} (-1)^{\gamma_1}\ds a_1\ot ...\ot \ds(a_l\cdot b)\ot \ds a_{l+1}\ot... \ot \ds a_m,
\end{align*}
for all homogeneous elements $a_1,a_2,\cdots, a_m,b\in B$, where
\begin{align*}
 \gamma_1 &=|b|\left(\sum_{s=l+1}^{m}|a_s|-m+l\right).
\end{align*}
On the other hand, using a slight abuse of notation, we set 
\begin{equation*}
E_{1,1}(\ds a;\ds b_1 \ot \ds b_2):=(-1)^{|a^2||b_1|+|a^2|}\ds(a^1\cdot b_1)\ot \ds (a^2\cdot b_2).
\end{equation*}
The abuse of notation comes from the fact that the terms  $(-1)^{|a||b_1|+|a|}\ds(b_1) \ot \ds (a\cdot b_2)$ and $\ds(a\cdot b_1)\ot \ds ( b_2)$ belong to the above terms i.e. the coproduct of $B$ evaluated on the element $a$ is not reduced. 
\\
Using the equation \eqref{dgmap} we extend $E_{1,1}$ (using the same abuse of notation) to:
\begin{align*}
 E_{1,1}(\ds a ; \ds b_1 \ot ... \ot \ds b_n):= (-1)^{\gamma_2+|a|+1}\ds (a^1\cdot b_1)\ot ...\ot \ds(a^n\cdot b_n),
\end{align*}
with
\begin{align*}
 \gamma_2 =\gamma_2(a)= &~\kappa_n(a)+ \sum_{u=2}^{n}(|a^u|-1)\left(\sum_{s=1}^{u-1}|b_s|-u+1 \right)\\
&+ \sum_{s=1}^{n}(|a^s|-1)(2n-2s+1) +\sum_{s=1}^{n-1}(|a^s|+|b_s|)(n-s) ,
\end{align*}
where
\begin{align*}
 \kappa_n(a):=\begin{cases}
              \sum_{1\leq 2s+1\leq n}|a^{2s+1}|     & \text{if $n$ is even} ;\\  
	      \sum_{1\leq 2s\leq n}|a^{2s}|     & \text{if $n$ is odd} .\\
	      \end{cases}
\end{align*}
Finally, we find the operation in \eqref{E11}:
\begin{multline*}
 E_{1,1}(\ds a_1 \ot ...\ot\ds a_m ; \ds b_1 \ot ... \ot \ds b_n)\\
:= \sum_{l=1}^{m} (-1)^{\gamma_3}\ds a_1 \ot... (a_l^1\cdot b_1)\ot ...\ot \ds(a_l^n\cdot b_n)\ot ...\ot \ds a_m
\end{multline*}
with 
\begin{align*}
 \gamma_3 &=\sum_{u=1}^{n}|b_u|\left(\sum_{s=l+u}^{m}|a_s|+m-l-u+1\right)+  
\kappa_n(a_l)+ \sum_{u=2}^{n}(|a_l^u|-1)\left(\sum_{s=1}^{u-1}|b_s|-u+1 \right)\\
&+ \sum_{s=1}^{n}(|a_l^s|-1)(2n-2s+1) +\sum_{s=1}^{n-1}(|a_l^s|+|b_s|)(n-s),
 \end{align*}
where
\begin{align*}
 \kappa_n(a):=\begin{cases}
              \sum_{1\leq 2s+1\leq n}|a^{2s+1}|     & \text{if $n$ is even};\\  
	      \sum_{1\leq 2s\leq n}|a^{2s}|     & \text{if $n$ is odd}.\\
	      \end{cases}
\end{align*}

\begin{de}
An $\infty$-morphism between two homotopy G-algebras, say $\La$ and $\La'$, is a  morphism of unital DG-algebras between the associated bar constructions: 
\[
 f: \Ba \La \to \Ba \La'.
\]
\end{de}
Such a morphism is a collection of maps
\[
f_n:\La^{\ot n} \to \La', ~~~n\geq 1,
\]
of degree $1-n$, satisfying the following relations \eqref{eqmorph hGa1} and \eqref{eqmorph hGa2}, 
\begin{equation}\label{eqmorph hGa1}
 \begin{split}
  \sum_{\substack{1\leq r \leq k+l, ~~0\leq k_i \leq 1\\ k_1+...+k_r=k \\ l_1+...+l_r=l}} \pm f_r(E^{\La}_{k_1,l_1}\ot ... \ot E^{\La}_{k_r,l_r}) 
= \sum_{\substack{1\leq w \leq l, ~~  0\leq v \leq 1 \\ i_1+...+i_v=k \\ j_1+...+j_w=l}}\pm E^{\La'}_{v,w}(f_{i_1}\ot  ...\ot f_{i_v};f_{j_1} \ot ...\ot f_{j_w}),
 \end{split}
\end{equation}
for all $k\geq 1$, $l\geq 1$, and  
\begin{align}\label{eqmorph hGa2}
\partial f_n =  \sum_{j+k+l=n}\pm f_{n-1}(1^{\ot j}\ot \mu^{\La}\ot l^{\ot l}) + 
 \sum_{\substack{ j+k=n}}\pm  \mu^{\La'}(f_{j}\ot f_k),
\end{align}
for all $n\geq 1$, where, $\partial$ is the differential of  $\Hom(\La^{\ot n},\La')$.
\\

Now we show that the homology of a homotopy G-algebra is a Gerstenhaber algebra.
To fix the convention:
 \begin{de}
 A Gerstenhaber algebra $(G,\cdot,\{~,~\})$ graded commutative algebra $(G,\cdot)$ endowed with a degree $1$ bracket,
\[
\{~;~\}:G\ot G \to G 
\]
satisfying the following relations:
\begin{align} 
   \{a,b\} &= -(-1)^{(|a|+1)(|b|+1)} \{b,a\};\label{Antisym}\\
\{a,b\cdot c\} &= \{a,b\}\cdot c + (-1)^{(|a|+1)|b|} b\cdot\{a,c\};\label{Poisson}\\
\{a,\{b,c\}\} &= \{\{a,b\},c\} + (-1)^{(|a|+1)(|b|+1)}\{b,\{a,c\}\}.\label{Jacobi}
\end{align}
\end{de}

\begin{prop}\label{rmq E dieze}
Let $(\La,d_{\La},\cdot,E_{1,k})$ be a homotopy G-algebra.
Then the degree $1$ bracket
\begin{equation*}
\{a;b\}=E_{1,1}(a;b)-(-1)^{(|a|-1)(|b|-1)}E_{1,1}(b;a) 
\end{equation*}
defines a Gerstenhaber algebra structure on the homology $H(\La,d_{\La})$.
\end{prop}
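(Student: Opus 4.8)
The plan is to derive each of the Gerstenhaber axioms \eqref{Antisym}, \eqref{Poisson}, \eqref{Jacobi} from the Hirsch-algebra relations \eqref{diff11}, \eqref{der11}, \eqref{der21} and \eqref{assoE} collected above, working throughout modulo $d_{\La}$-boundaries so that identities which hold only up to homotopy on $\La$ become exact on $H(\La,d_{\La})$. First I would check that the structure is well defined on homology. Relation \eqref{diff11} exhibits $E_{1,1}$ as a chain homotopy between $\mu_{\La}$ and $\mu_{\La}^{\mathrm{op}}$, so the induced product on $H(\La,d_{\La})$ is graded commutative; thus $(H(\La,d_{\La}),\cdot)$ is a graded commutative associative algebra. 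The same relation shows, for cycles $a,b$, that the two boundary contributions to $d_{\La}\{a;b\}$ cancel once the sign $(-1)^{(|a|-1)(|b|-1)}$ in the definition of the bracket is accounted for, so $\{a;b\}$ is again a cycle; feeding \eqref{diff11} in a second time shows that $\{a;b\}$ becomes a boundary as soon as $a$ or $b$ is one. Hence $\{-;-\}$ descends to a degree $1$ operation on $H(\La,d_{\La})$. The antisymmetry \eqref{Antisym} is then immediate from the definition, since $(-1)^{(|a|-1)(|b|-1)}$ and $(-1)^{(|a|+1)(|b|+1)}$ agree.

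For the Leibniz rule \eqref{Poisson} I would use the two one-sided derivation identities. Equation \eqref{der11} says that $E_{1,1}(-;b)$ is a strict derivation of $\cdot$ in its first slot, while \eqref{der21} identifies the defect of $E_{1,1}(a;-)$ from being a derivation in its second slot with the boundary $d_{\La}E_{1,2}$, so that on homology $E_{1,1}(a;-)$ is a derivation as well. Expanding $\{a;bc\}$ by applying these two facts to $E_{1,1}(a;bc)$ and to $E_{1,1}(bc;a)$, and rewriting the resulting products by means of the graded commutativity of $\cdot$ just established, reassembles precisely $\{a;b\}\cdot c+(-1)^{(|a|+1)|b|}b\cdot\{a;c\}$.

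The Jacobi identity \eqref{Jacobi} is the structurally cleanest point. Since $\La$ is a homotopy G-algebra the terms $E_{2,1}$ drop out of \eqref{assoE}, which then states that the associator $E_{1,1}(E_{1,1}(a;b);c)-E_{1,1}(a;E_{1,1}(b;c))$ equals $E_{1,2}(a;b,c)+(-1)^{(|b|-1)(|c|-1)}E_{1,2}(a;c,b)$; the right-hand side is graded symmetric under the exchange of $b$ and $c$, so $E_{1,1}$ is a graded right pre-Lie (brace) product after the degree $1$ shift. The graded Jacobiator of the commutator bracket attached to such a product is the total antisymmetrization of this associator, in which the symmetric $E_{1,2}$-correction cancels; consequently \eqref{Jacobi} holds already at the chain level, and a fortiori on $H(\La,d_{\La})$.

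The conceptual skeleton is the classical derivation of a shifted Lie bracket from a right pre-Lie product by antisymmetrization, together with the observation that the product becomes commutative and the higher Leibniz defects become exact on homology. I expect the only real labour to be the sign bookkeeping: matching the Koszul signs generated by the degree $1$ shift of $E_{1,1}$ and by the explicit signs in \eqref{der11}, \eqref{der21} and \eqref{assoE} against the prescribed signs in \eqref{Antisym}, \eqref{Poisson} and \eqref{Jacobi}. That verification, rather than any conceptual gap, is where I expect the main obstacle to lie.
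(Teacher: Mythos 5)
Your proposal is correct and follows essentially the same route as the paper: antisymmetry by construction, compatibility with $d_{\La}$ and the Poisson rule obtained up to homotopy from \eqref{diff11}, \eqref{der11} and \eqref{der21}, and Jacobi from \eqref{assoE} with the $E_{2,1}$ terms absent in a homotopy G-algebra. Your appeal to the graded right pre-Lie principle for the Jacobi identity is exactly what the paper verifies by hand in its explicit cancellation $L+L'=0$, so there is no gap, only a difference in packaging.
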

\begin{proof}
The equality \eqref{diff11} shows the commutativity of the product of $\La$ up to homotopy.
Indeed, it suffices to set $E^\#_{1,1}(a;b):=(-1)^{|a|}E_{1,1}(a;b)$ for each homogeneous element to obtain the desired homotopy. 
The symmetry condition \eqref{Antisym} is satisfied by construction.
The Jacobi relation \eqref{Jacobi}  comes from  \eqref{assoE}.
Indeed, let us first observe that applying \eqref{assoE} we have:
\begin{align*}
E_{1,1}(a;\{b;c\})&=E_{1,1}(a;E_{1,1}(b;c))-(-1)^{(|b|-1)(|c|-1)}E_{1,1}(a;E_{1,1}(c;b))\\
&=E_{1,1}(E_{1,1}(a;b);c)-(-1)^{(|b|-1)(|c|-1)}E_{1,1}(E_{1,1}(a;c);b) +R
\end{align*}
where 
\begin{align*}
 R&=- E_{1,2}(a;b,c)-(-1)^{(|b|-1)(|c|-1)}E_{1,2}(a;c,b)\\
 &-(-1)^{(|b|-1)(|c|-1)}\Bigl( -  E_{1,2}(a;c,b)-(-1)^{(|b|-1)(|c|-1)}E_{1,2}(a;b,c) \Bigr)\\
&= 0.
\end{align*}
From this 
\begin{align*}
\{a;\{b;c\}\}=& E_{1,1}(a;\{b;c\}) -(-1)^{(|a|-1)(|b|+|c|)}E_{1,1}(\{b;c\};a)\\
=&  E_{1,1}(a;\{b;c\})\\ 
&-(-1)^{(|a|-1)(|b|+|c|)}\Bigl(E_{1,1}(E_{1,1}(b;c);a)-(-1)^{(|b|-1)(|c|-1)}E_{1,1}(E_{1,1}(c;b);a) \Bigr)\\
=&E_{1,1}(E_{1,1}(a;b);c)-(-1)^{(|b|-1)(|c|-1)}E_{1,1}(E_{1,1}(a;c);b) \\
&-(-1)^{(|a|-1)(|b|+|c|)}\Bigl(E_{1,1}(b;E_{1,1}(c;a))-(-1)^{(|b|-1)(|c|-1)}E_{1,1}(c;E_{1,1}(b;a)) \Bigr)\\
&+L,
\end{align*}
where
\begin{multline*}
 L=-(-1)^{(|a|-1)(|b|+|c|)}\biggl[ E_{1,2}(b;c,a)+(-1)^{(|c|-1)(|a|-1)}E_{1,2}(b;a,c)\\
 -(-1)^{(|b|-1)(|c|-1)}\Bigl( E_{1,2}(c;b,a)+(-1)^{(|b|-1)(|a|-1)}E_{1,2}(c;a,b) \Bigr)\biggr].
\end{multline*}
By definition
\begin{align*}
- \{\{a;b\};c\}
=&-E_{1,1}(E_{1,1}(a;b);c)+(-1)^{(|a|-1)(|b|-1)}E_{1,1}(E_{1,1}(b;a);c) \\
&+(-1)^{(|c|-1)(|a|+|b|)}\Bigl(E_{1,1}(c;E_{1,1}(a;b))-(-1)^{(|a|-1)(|b|-1)}E_{1,1}(c;E_{1,1}(b;a)) \Bigr).
\end{align*}
Thus we obtain,
\begin{align*}
 \{a;\{b;c\}\} - \{\{a;b\};c\}
=&-(-1)^{(|b|-1)(|c|-1)}E_{1,1}(E_{1,1}(a;c);b) \\
&-(-1)^{(|a|-1)(|b|+|c|)}E_{1,1}(b;E_{1,1}(c;a))\\
&+(-1)^{(|a|-1)(|b|-1)}E_{1,1}(E_{1,1}(b;a);c) \\
&+(-1)^{(|c|-1)(|a|+|b|)}E_{1,1}(c;E_{1,1}(a;b))\\
&+ L.
\end{align*}
Finally, applying once again the equality \eqref{assoE} to both the 3-th and 4-th term, we obtain
\begin{align*}
 \{a;\{b;c\}\} - \{\{a;b\};c\}
=&-(-1)^{(|b|-1)(|c|-1)}E_{1,1}(E_{1,1}(a;c);b) \\
&-(-1)^{(|a|-1)(|b|+|c|)}E_{1,1}(b;E_{1,1}(c;a))\\
&+(-1)^{(|a|-1)(|b|-1)}E_{1,1}(b;E_{1,1}(a;c)) \\
&+(-1)^{(|c|-1)(|a|+|b|)}E_{1,1}(E_{1,1}(c;a);b)\\
&+ L + L' \\
=&-(-1)^{(|b|-1)(|c|-1)}E_{1,1}(\{a;c\};b) 
+(-1)^{(|a|-1)(|b|-1)}E_{1,1}(b;\{a;c\})\\
&+ L + L'\\
=&(-1)^{(|a|-1)(|b|-1)}\{b;\{a;c\}\}\\
&+ L + L',
\end{align*}
where
\begin{multline*}
 L'= (-1)^{(|a|-1)(|b|-1)}\Bigl( E_{1,2}(b;a,c)+(-1)^{(|a|-1)(|c|-1)}E_{1,2}(b;c,a) \Bigr)\\
+(-1)^{(|c|-1)(|a|+|b|)} \Bigl(-E_{1,2}(c;a,b)-(-1)^{(|a|-1)(|b|-1)}E_{1,2}(c;b,a)\Bigr).
\end{multline*}
The equality  $L+L'=0$ is easily verified.\\

The Poisson relation \eqref{Poisson} follows from the equations  \eqref{der11} and \eqref{der21}.
Indeed, take $(-1)^{|b_1|}E_{1,2}(a;b_1,b_2)$ instead of $E_{1,2}(a;b_1,b_2)$ in \eqref{der21}, we obtain:
\begin{align*}
\{a;bc\}=& E_{1,1}(a;bc)-(-1)^{(|a|-1)(|b|+|c|-1)} E_{1,1}(bc;a)\\
\sim & E_{1,1}(a;b)c+ (-1)^{(|a|-1)|b|} bE_{1,1}(a;c)\\
&-(-1)^{(|a|-1)(|b|+|c|-1)}\Bigl(  bE_{1,1}(c;a) + (-1)^{|c|(|a|-1)} E_{1,1}(b;a)c\Bigr)\\
\sim &  \Bigl(E_{1,1}(a;b) -(-1)^{(|a|-1)(|b|+|c|-1)+|c|(|a|-1)} E_{1,1}(b;a)\Bigr)c\\
&+b\Bigl(    (-1)^{(|a|-1)|b|} E_{1,1}(a;c)-(-1)^{(|a|-1)(|b|+|c|-1)}E_{1,1}(c;a) \Bigr)\\
\sim &\{a;b\}c +(-1)^{(|a|-1)|b|}b\{a;c\},
\end{align*}
where $\sim$ is the equivalence relation: $a\sim b$ iff $a$ is homotopic to $b$.

The commutativity between  the bracket and the differential follows from \eqref{diff11}.
Indeed,
\begin{align*}
d_{\La}\{a;b\}=& d_{\La}E_{1,1}(a;b)-(-1)^{(|a|-1)(|b|-1)} d_{\La}E_{1,1}(b;a)\\ 
=&-E_{1,1}(d_{\La}a;b)-(-1)^{|a|}E_{1,1}(a;d_{\La}b)+(-1)^{|a|}\Bigl(a\cdot b -(-1)^{|a||b|}b\cdot a\Bigr)\\
-& (-1)^{(|a|-1)(|b|-1)}\biggl( E_{1,1}(d_{\La}b;a)-(-1)^{|b|}E_{1,1}(b;d_{\La}a)+(-1)^{|b|}\Bigl(b\cdot a -(-1)^{|a||b|}a\cdot b\Bigr)  \biggr)\\
=&-E_{1,1}(d_{\La}a;b)-(-1)^{|a|}E_{1,1}(a;d_{\La}b) \\
&- (-1)^{(|a|-1)(|b|-1)}\biggl( E_{1,1}(d_{\La}b;a)-(-1)^{|b|}E_{1,1}(b;d_{\La}a)\biggr)\\
&+(-1)^{|a|}\Bigl(a\cdot b -(-1)^{|a||b|}b\cdot a\Bigr) - (-1)^{(|a|-1)(|b|-1)+|b|}\Bigl(b\cdot a -(-1)^{|a||b|}a\cdot b\Bigr)\\
=&-E_{1,1}(d_{\La}a;b)-(-1)^{|a|}E_{1,1}(a;d_{\La}b)\\
& - (-1)^{(|a|-1)(|b|-1)}\biggl( E_{1,1}(d_{\La}b;a)-(-1)^{|b|}E_{1,1}(b;d_{\La}a)\biggr)\\
=&- \{d_{\La}a;b\} - (-1)^{|a|}\{a;d_{\La}b\}.
\end{align*}
\end{proof}
\begin{prop}\label{prop: morphismes G-alg Gerst}
 {An $\infty$-morphism of homotopy G-algebras induces in homology a morphism of Gerstenhaber algebras.}
\end{prop}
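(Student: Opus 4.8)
The plan is to show that the first component $f_1\colon\La\to\La'$ of the $\infty$-morphism already induces the desired morphism of Gerstenhaber algebras $H(f_1)\colon H(\La,d_{\La})\to H(\La',d_{\La'})$, by reading off the low-arity instances of the defining relations \eqref{eqmorph hGa1} and \eqref{eqmorph hGa2}, in the same spirit as the verification of the axioms in Proposition \ref{rmq E dieze}. First I would check that $f_1$ is a chain map: in \eqref{eqmorph hGa2} with $n=1$ both sums on the right are empty, since the binary product $\mu^{\La}$ forces $j+2+l=1$ and a term $\mu^{\La'}(f_j\ot f_k)$ forces $j+k=1$ with $j,k\geq 1$, both impossible. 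Hence $\partial f_1=0$ and $f_1$ descends to $\ov{f}:=H(f_1)$ on homology.

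Next I would treat the product. Taking $n=2$ in \eqref{eqmorph hGa2} yields $\partial f_2=\pm\, f_1\mu^{\La}\pm\mu^{\La'}(f_1\ot f_1)$, so on cycles $a,b$ the elements $f_1(a\cdot b)$ and $f_1(a)\cdot f_1(b)$ differ by the boundary $d_{\La'}f_2(a\ot b)$. Since by Proposition \ref{rmq E dieze} the products on $H(\La)$ and $H(\La')$ are the induced graded-commutative products, $\ov{f}$ is a morphism of commutative graded algebras.

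The substantive point is compatibility with the bracket, for which I would specialise \eqref{eqmorph hGa1} to $(k,l)=(1,1)$. On the right-hand side one is forced to take $v=w=1$, leaving the single term $\pm E^{\La'}_{1,1}(f_1;f_1)$; on the left-hand side the $r=1$ term is $\pm f_1E^{\La}_{1,1}$, while the two $r=2$ terms are $\pm f_2(E^{\La}_{1,0}\ot E^{\La}_{0,1})$ and $\pm f_2(E^{\La}_{0,1}\ot E^{\La}_{1,0})$, where $E_{1,0}=E_{0,1}=Id$ by the unit condition of the Appendix; evaluated on $(a;b)$ these read $\pm f_2(a\ot b)$ and $\pm f_2(b\ot a)$. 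Thus, up to these $f_2$-corrections, $f_1$ intertwines $E^{\La}_{1,1}$ with $E^{\La'}_{1,1}$. Forming the antisymmetric combination that defines $\{-;-\}$ from both $E^{\La}_{1,1}(a;b)$ and $E^{\La}_{1,1}(b;a)$, the $E^{\La'}_{1,1}$-terms assemble into $\{f_1a;f_1b\}$, and I would show that the remaining antisymmetrised $f_2$-corrections form a boundary: their differential is computed by the $n=2$ instance of \eqref{eqmorph hGa2}, and the homotopy-commutativity of the products recorded in \eqref{diff11} makes the combination exact on homology. This gives $\ov{f}\{[a];[b]\}=\{\ov{f}[a];\ov{f}[b]\}$, and together with the preceding paragraph $\ov{f}=H(f_1)$ is a morphism of Gerstenhaber algebras.

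The hard part is precisely this last cancellation. It requires tracking the Koszul signs introduced by the desuspensions of \eqref{conv} and by the suspensions implicit in \eqref{eqmorph hGa1} and \eqref{eqmorph hGa2}, and identifying the antisymmetrised $f_2$-correction with the boundary of an explicit degree-$2$ operation built from $f_2$. Everything else reduces to the bookkeeping of empty or one-term sums, exactly as in the verification of the Gerstenhaber axioms in Proposition \ref{rmq E dieze}.
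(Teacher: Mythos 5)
Your reduction to the components $f_1$ and $f_2$ and your reading of the low-arity instances of \eqref{eqmorph hGa1} and \eqref{eqmorph hGa2} match the paper's proof: the $(k,l)=(1,1)$ case of \eqref{eqmorph hGa1} gives
\begin{equation*}
E_{1,1}(f_1(a);f_1(b))-f_1E_{1,1}(a;b)= (-1)^{|a|-1}f_2(a;b) +(-1)^{|a|(|b|-1)}f_2(b;a),
\end{equation*}
exactly as you describe, and the chain-map and product statements are the routine parts. The gap is in your treatment of the last step. Writing $R(a;b):=(-1)^{|a|-1}f_2(a;b)+(-1)^{|a|(|b|-1)}f_2(b;a)$ for the correction term, a one-line sign check shows $R(a;b)=(-1)^{(|a|-1)(|b|-1)}R(b;a)$, i.e.\ $R$ is graded \emph{symmetric} with respect to the shifted degrees. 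Hence it is annihilated identically by the antisymmetrization defining $\{-;-\}$, and one gets $\{f_1(a);f_1(b)\}=f_1(\{a;b\})$ on the nose, at the chain level. This exact cancellation is how the paper concludes; no boundary argument is needed, and, more importantly, none is available along the route you propose.

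Concretely, the $n=2$ instance of \eqref{eqmorph hGa2} computes $d_{\La'}f_2(a;b)\pm f_2(d_{\La}a;b)\pm f_2(a;d_{\La}b)$ as $\pm f_1(a\cdot b)\pm f_1(a)\cdot f_1(b)$; on cycles this tells you what $f_2(a;b)$ \emph{bounds} (namely the defect of multiplicativity of $f_1$), not that $f_2(a;b)$, or any nonzero combination of $f_2(a;b)$ and $f_2(b;a)$, is itself a boundary. Likewise \eqref{diff11} concerns the homotopy commutativity of the products of $\La$ and $\La'$ and produces no primitive for the $f_2$-terms. So if your sign bookkeeping left a genuinely nonzero antisymmetrized correction, you would be stuck: nothing in the axioms forces such a cycle to be exact. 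The proof must close by observing the exact symmetry of $R$, which is a short Koszul-sign verification rather than a homotopy argument.
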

\begin{proof}
Let us take the following convention $f_i:=\widetilde{f}_i(\s^{\ot i})$. Then the equation \eqref{eqmorph hGa1} reads for $k=l=1$:
\begin{equation}\label{eq: morphisme Galg homot f1 et f2}
E_{1,1}(f_1(a);f_1(b))-f_1E_{1,1}(a;b)= (-1)^{|a|-1}f_2(a;b) +(-1)^{|a|(|b|-1)}f_2(b;a).
\end{equation}
We obtain
\begin{align*}
 \{f_1(a);f_1(b)\}=&~E_{1,1}(f_1(a);f_1(b))-(-1)^{|a|-1)(|b|-1)}E_{1,1}(f_1(b);f_1(a))\\
=&~ f_1E_{1,1}(a;b) +(-1)^{|a|-1}f_2(a;b) -(-1)^{|a|(|b|-1)}f_2(b;a)\\
& -(-1)^{|a|-1)(|b|-1)}\Bigl(f_1E_{1,1}(b;a) +(-1)^{|b|-1}f_2(b;a) -(-1)^{|b|(|a|-1)}f_2(a;b)\Bigr)\\
=&~  f_1(\{a;b\}).
\end{align*}
\end{proof}

\paragraph{Acknowledgement}

I would like to thank Jean-Claude Thomas for his very helpful remarks, in particular for suggesting the study of suspensions.
I am grateful to Muriel Livernet for the useful conversation at IHP and her many comments about the content of this paper.

\bibliography{bibliothese4b2-UTF8}
\bibliographystyle{plain}
\end{document}